\DeclareMathOperator*{\argmin}{argmin}
\newcommand{\inner}[2]{\langle{#1},{#2}\rangle}
\newcommand{\NN}{\ensuremath{\mathbb N}}
\newcommand{\dom}{\ensuremath{\operatorname{dom}}}
\newcommand{\scal}[2]{\left\langle{#1},{#2}\right\rangle}
\newcommand{\prox}{\ensuremath{\operatorname{Prox}}}
\newcommand{\Id}{\ensuremath{\operatorname{Id}}}
\def\NN{{\mathbb{N}}}
\DeclarePairedDelimiter\rb{ ( }{ ) }
\def\ve{\varepsilon}
\newtheorem{algorithm}{Algorithm}
\newtheorem{ex}{Rule}
\newtheorem{remark}{Remark}
\newcommand{\bi}{\begin{itemize}}
\newcommand{\ei}{\end{itemize}}
\newcommand{\ba}{\begin{array}}
\newcommand{\ea}{\end{array}}
\newcommand{\TheTitle}{On Inexact Accelerated Proximal Gradient Methods with Relative Error Rules}
\title{{\TheTitle}}
\author{
    Yunier Bello-Cruz
    \thanks{Department of Mathematical Sciences, Northern Illinois University, DeKalb, IL 60115, USA. (E-mails: {\tt
       yunierbello@niu.edu} and {\tt nkrislock@niu.edu}). YBC was partially supported by the \emph{National Science Foundation} (NSF), Grant DMS -- 1816449.}
   \and
   Max L. N. Gon\c calves
    \thanks{IME, Universidade Federal de Goi\'as,  Goi\^ania-GO 74001-970, Brazil. (E-mail: {\tt
       maxlng@ufg.br}).     MLNG was partially supported by the \emph{Brazilian Agency  Conselho Nacional de Pesquisa} (CNPq), Grants 302666/2017-6 and 408123/2018-4.  }
     \and
      Nathan Krislock \footnotemark[2]
    }
\DeclareMathOperator{\diag}{diag}
\DeclareMathOperator{\Diag}{Diag}
\DeclareMathOperator{\trace}{trace}
\begin{document}

\maketitle

\begin{abstract} 
One of the most popular and important first-order iterations that provides optimal complexity of the classical proximal gradient method (PGM) is the ``Fast Iterative Shrinkage/Thresholding Algorithm'' (FISTA). In this paper, two inexact versions of FISTA for minimizing the sum of two convex functions are studied. The proposed schemes inexactly solve their subproblems by using relative error criteria instead of exogenous and diminishing error rules. When the evaluation of the proximal operator is difficult, inexact versions of FISTA are necessary and the relative error rules proposed here may have certain advantages over previous error rules. The same optimal convergence rate of FISTA is recovered for both proposed schemes. Some numerical experiments are reported to illustrate the numerical behavior of the new approaches.
\end{abstract}

\begin{keywords}
FISTA, inexact accelerated proximal gradient method, iteration
complexity, nonsmooth and convex optimization problems, proximal
gradient method, relative error rule.
\end{keywords}

\begin{AMS}
47H05, 47J22, 49M27, 90C25, 90C30, 90C60, 65K10.
\end{AMS}

\section{Introduction}\label{sec:int}
Throughout this paper, we write $p \coloneqq q$ to indicate that $p$ is defined to be equal to $q$. The nonnegative (positive) numbers will be denoted by $\mathbb{R}_+$ ($\mathbb{R}_{++}$). Moreover, $\mathbb{E}$ denotes a finite-dimensional real vector space, which is equipped with the inner product $\scal{\,\cdot\,}{\cdot\,}$ and its induced norm $\|\cdot\|$.

Consider the following problem
\begin{equation}\label{pr1}
\min_{x \in\mathbb{E}}F(x)\coloneqq f(x)+g(x),
\end{equation}
where $f\colon \mathbb{E}\to \mathbb{R}$ is a differentiable convex function whose gradient is $L$-Lipschitz continuous and $g\colon \mathbb{E}\to \overline{\mathbb{R}}\coloneqq \mathbb{R}\cup \{+\infty\}$ is a lower semicontinuous (lsc) convex function that is not necessarily differentiable.
We denote the optimal value of \cref{pr1} by $F^*$, the set of optimal solutions of \cref{pr1} by $S_*$, and we assume that $F^* \in \mathbb{R}$ and that $S_*$ is nonempty; thus we have $F(x_*) = F^*$, for all $x_* \in S_*$.
It is well-known that \cref{pr1} contains a wide class of problems arising in applications from science and engineering, including machine learning, compressed sensing, and image processing. There are important examples of this problem such as using $\ell_{1}$--regularization to obtain sparse solutions with applications in signal recovery and signal processing problems \cite{BTe,HYZ,Tr}, the nearest correlation matrix problem \cite{Borsdorf2010a,Defeng-S-2012,Qi2006a}, and regularized inverse problems with atomic norms \cite{Villa-Salzo-Luca-Verri-2013}.

A plethora of methods has been proposed for solving the aforementioned optimization problem. One of the most studied approaches is the proximal gradient method (PGM) which is a first-order splitting iteration that has been intensively investigated in the literature; see, for instance, \cite{FISTA, YN2,YN}. PGM iterates by performing a gradient step based on $f$ followed by the evaluation of the {\em proximal} (or $\prox$) {\em operator} of $g$,
which is defined as $\prox_g\coloneqq (\Id+\partial g)^{-1}$ where
$$\partial g(x)\coloneqq
\bigl\{
u \in \mathbb{E}
\bigm\vert
g(y)\ge g(x)+ \langle u, y-x\rangle,
\forall y \in \mathbb{E}
\bigr\}$$
is the subdifferential of $g$ at $x\in \mathbb{E}$ and $\Id$ is the identity operator.
 It is well-known that the sequence $(x_k)_{k\in \mathbb{N}}$ generated by PGM has a complexity rate of $\mathcal{O}(\rho^{-1})$ to obtain a $\rho$--approximate solution of \cref{pr1} (that is, a solution $x_k$ satisfying $F(x_k)-F^*\le \rho$), or equivalently we can say that $F(x_k)-F^*=\mathcal{O}(k^{-1})$; see, for instance, \cite{FISTA, YN2, YN}. In addition, it is possible to accelerate the proximal gradient method in order to
achieve the optimal $\mathcal{O}(k^{-2})$ convergence rate by adding an extrapolation step. This
scheme, which improved the complexity of the gradient method for minimizing smooth convex functions, was first introduced by Nesterov in 1983 \cite{Nesterov1983} and further extended to constrained problems in 1988 \cite{Nesterov1988,Nesterov2005}. In the spirit of the work of \cite{Nesterov1983}, Nesterov \cite{Nesterov2013} (appeared online in 2007 but published in 2013) and Beck--Teboulle \cite{FISTA} extended Nesterov's classical iteration to minimizing composite nonsmooth functions.

In this paper, we propose a modification of the ``Fast Iterative Shrinkage/Thresh\-olding Algorithm'' (FISTA) of \cite{FISTA}. FISTA  is described  as follows.
\medskip
\begin{center}\fbox{\begin{minipage}[b]{0.981\textwidth}
\begin{algorithm}[FISTA]
Let $x_{0} \in \mathbb{E}$, and $L>0$ be the Lipschitz constant of $\nabla f$. Set $y_{1}\coloneqq x_{0}$, $t_1 \coloneqq 1$, and iterate
	\begin{align}\label{xk-FISTA}
	x_{k} & \coloneqq \prox_{\frac{1}{L}g}\left(y_k-\frac{1}{L}\nabla f(y_k)\right), \\
	t_{k+1} & \coloneqq \frac{1+\sqrt{1+4t_{k}^{2}} }{2},\label{tk-FISTA}\\\label{yk-FISTA}
	y_{k+1} & \displaystyle
	\coloneqq
	x_{k} + \frac{t_{k}-1}{t_{k+1}}
    \rb{x_{k} - x_{k-1}}.
	\end{align}
\end{algorithm}
\end{minipage}}\end{center}
\medskip
Note that if the update \cref{tk-FISTA} is ignored and $t_k= 1$ for all $k\in \mathbb{N}$, FISTA becomes the (unaccelerated)
PGM mentioned before.
There are two very popular choices for the sequence $(t_k)_{k\in\mathbb{N}}$ \cite{FISTA,Nesterov2013} but
several different updates are possible for $t_k$ that also achieve the optimal acceleration; see, for instance,
\cite{Attouch-fast-MPB-2018,Attouch-rate-2016,Chambolle-Dossal-15,Su-Boyd-Candes-2016}. Convergence and complexity results of the sequence generated by FISTA under a suitable tuning of $(t_k)_{k\in\mathbb{N}}$ related to the update \cref{tk-FISTA} can be found in \cite{Attouch-Cabot-HAL2017,Attouch-rate-2016,YN,Chambolle-Dossal-15}. Many accelerated versions have been proposed in the literature for accelerating the PGM for solving \cref{pr1}. The relaxed case was considered in \cite{Aujol-Dossal-2015} and error-tolerant versions were studied in \cite{Attouch-JOTA-18,Attouch-fast-MPB-2018}.
In addition, for results concerning the rate of convergence of function values of \cref{pr1} with or without minimizers, see \cite{B-Bu-2019,Su-Boyd-Candes-2016}.

FISTA (and in particular PGM) is an effective and simple choice for solving large scale problems when the $\prox$ operator has a closed-form or there exists an efficient way to evaluate it. Frequently, it could be computationally expensive to evaluate the $\prox$ operator at any point with high accuracy \cite{FB-2014}. The theory of convergence for the (accelerated) PGM assumes that the $\prox$ operator can be evaluated at any point; that is, the regularized minimization problem
\[
\min_{x\in\mathbb{E}} \left\{ g(x)+\frac{1}{2\gamma}\|x-z\|^2\right\}, \quad \gamma>0,
\]
can be solved for any $z\in\mathbb{E}$. The unique solution of the above problem is actually the $\prox$ operator of $\gamma g$ at $z$, which is the function $\prox_{\gamma g}\colon\mathbb{E}\to \dom g$ defined by
\[
\prox_{\gamma g}(z)\coloneqq \argmin_{x\in\mathbb{E}} \left\{g(x)+\frac{1}{2\gamma}\|x-z\|^2\right\}.
\]
This function satisfies the following necessary and sufficient optimality condition:
\[
 \frac{1}{\gamma}\left(z-\prox_{\gamma g}(z)\right)\in \partial g(\prox_{\gamma g}(z)).
\]
Therefore, to run FISTA we must compute $x_k$ by solving the subproblem
\begin{equation}\label{xk-FISTA-S}
\min_{x\in \mathbb{E}}\left\{ g(x)+\frac L2\left\|x-\left(y_k-\frac{1}{L}\nabla f(y_k)\right)\right\|^2\right\}.
\end{equation} That is, we must find
the point $x_k$ that satisfies
$$
0 \in \partial g(x_k)+L(x_k-y_k) +\nabla f(y_k).
$$
A natural question is: What happens if the solution of \cref{xk-FISTA-S} can not be easily computed? Often in practice in this case the evaluation of the proximal operator is done approximately. However, to guarantee an optimal complexity rate, it is required that the nonnegative sequence of error tolerances be summable. As was shown in \cite{Defeng-S-2012,Villa-Salzo-Luca-Verri-2013}, with a summable sequence of error tolerances for these approximate solutions, the optimal complexity rate $\mathcal{O}(k^{-2})$ of FISTA is recovered.

The two works \cite{Defeng-S-2012,Villa-Salzo-Luca-Verri-2013} appeared simultaneously around 2013 and proposed inexact variations of FISTA with summable error tolerances for computing the $\ve$--approximate solutions of subproblem \cref{xk-FISTA-S}. In \cite{Villa-Salzo-Luca-Verri-2013}, given a nonnegative sequence $(\ve_k)_{k\in \mathbb{N}}$, iterates $\tilde x_k$ are generated such that
\begin{equation}\label{inexact-Sum}
0 \in \partial_{\ve_k} g(\tilde x_k)+L(\tilde x_k-y_k) + \nabla f(y_k),
\end{equation}
where
$$\partial_\ve g(x)\coloneqq
\bigl\{
u \in \mathbb{E}
\bigm\vert
g(y) \ge g(x) + \inner{u}{y-x} - \ve, \forall y \in \mathbb{E}
\bigr\}$$
is an enlargement of $\partial g$.
On the other hand, the version in \cite{Defeng-S-2012} allows the approximate solution $\tilde x_k$ of subproblem \cref{xk-FISTA-S} such that
\begin{gather}\label{eq:eid34}
F(\tilde x_k)\leq g(\tilde x_k)+f(y_k)+\inner{\nabla f(y_k)}{\tilde x_k - y_k}+\inner{\tilde x_k - y_k}{H_k(\tilde x_k - y_k)}+ \frac{\xi_k}{2 t_k^2},\\\nonumber
v_k \in \partial_{ \frac{\xi_k}{2 t_k^2}}g(\tilde x_k) + H_k(\tilde x_k - y_k) + \nabla f(y_k), \quad
\|H_k^{-1/2}v_k\| \le \frac{\delta_k}{\sqrt{2} t_k},
\end{gather}
where   $(\delta_k)_{k\in \mathbb{N}}$ and $(\xi_k)_{k\in \mathbb{N}}$ are summable sequences of nonnegative numbers, and $H_k$ is  a self-adjoint positive definite operator. If $H_k=L\Id$, then inequality \eqref{eq:eid34} is trivially satisfied.
In both of these approaches, the summability assumption may require us to find $\tilde x_k$ to a  level of accuracy that is higher than necessary.

In the spirit of \cite{Complexity-HPE,HPE2}, we propose two inexact versions with {\em relative} error rules for solving the main subproblem of FISTA. The advantages over the inexact methods given in \cite{Defeng-S-2012,Villa-Salzo-Luca-Verri-2013} are the following:
\begin{itemize}

\item[\bf (a)] The proposed relative error rules have no summability assumption and the error tolerances naturally depend on the generated iterates. Our first proposed method is a generalization of FISTA and our second proposed method is related to the extra-step acceleration method proposed in \cite{acc-HPE}.

\item[\bf (b)] We recover the optimal iteration convergence rate in terms of the objective function value for both proposed inexact methods.
Moreover, for a given  tolerance $\rho>0$,  we also study iteration-complexity bounds  for the proposed algorithms in order to obtain a $\rho$--approximate solution $x$ of the inclusion $0\in \partial F(x)$ with residual $(r,\ve)$, i.e.,
\[
r\in \partial_{\ve} F(x), \quad \max\{\|r\|,\ve\}\leq \rho.
\]
Since $0\in \partial F(x_*) $, for all $x_* \in S_*$, the latter condition can be interpreted as an optimality measure for $x$.
\end{itemize}

The presentation of this paper is as follows. Definitions, basic facts and auxiliary results are presented in \cref{pre}. Our inexact criteria with relative error rules are presented in \cref{Inexact-Rules}. In \cref{PFA,IEAM} we present the inexact algorithms and their convergence rates. Some  numerical experiments for the proposed schemes are reported in \cref{NumSec}. Finally, some concluding remarks are given in \cref{Finalremarks}.

\section{Definitions and auxiliary results}\label{pre}

Let $h \colon \mathbb{E} \to \overline{\mathbb{R}}$ be a proper, convex, and lower
semicontinuous (\emph{l.s.c.}) function. We denote the
domain of $h$ by $\dom h \coloneqq \{x\in \mathbb{E} \mid h(x)<+\infty\}$. Recall that the proximal operator $\prox_h \colon \mathbb{E}\to \dom g$ is defined by $\prox_h(x):=(\Id+\partial h)^{-1}(x)$. It is well-known that the proximal operator is single-valued with full domain, is continuous, and has many other attractive properties. In particular, the proximal operator is firmly nonexpansive:
\[
\left\|\prox_{h}(x)-\prox_{h}(y)\right\|^2\le \|x-y\|^2-\left\|(x-\prox_{h}(x))-(y-\prox_{h}(y))\right\|^2,
\]
for all $x,y\in\mathbb{E}$. Moreover,
\[
0\in \partial g(\prox_{\gamma h}(x))+\frac{1}{\gamma}\left(\prox_{\gamma h}(x)-x\right),\quad x\in\mathbb{E},\,\gamma>0.
\]

We let $J\colon{\mathbb{E}}\times \mathbb{R}_{++}\to \dom g$ be the {\em forward-backward operator} for problem~\cref{pr1}, which is given by
\begin{equation}\label{J}
J(x,\gamma):=\prox_{\gamma g}(x-\gamma \nabla f(x)),\quad x\in \mathbb{E},\,\gamma>0.
\end{equation}
It is well known that if $h$ is differentiable and
$\nabla h$ is $L$-Lipschitz continuous on $ \mathbb{E}$, i.e.,
\[
\|\nabla h(x)-\nabla h(y)\|\leq L\|x-y\|, \quad x,y \in \mathbb{E},
\]
then, for all $x, y \in \mathbb{E}$, we have
\begin{equation}\label{eqlq}
h(y)+\langle \nabla h(y), x-y\rangle\leq h(x)\leq h(y)+\langle \nabla h(y), x-y\rangle +\frac{L}{2}\|x-y\|^2.
\end{equation}

The next lemma provides some basic  properties of the subdifferential  operator.

\begin{lemma}\label{for:transp}
Let $h,f \colon \mathbb{E} \to \overline{\mathbb{R}}$ be proper, closed, and convex functions. Then:
\item[ {\bf (i)}]
$\partial_{\ve}h(x)+\partial_{\mu}f(x)\subset\partial_{\ve+\mu}(h+f)(x)$, for all $x\in \mathbb{E}$ and $\ve,\mu\geq0$.

\item [ {\bf (ii)}]
$w\in \partial h(y)$ implies $w\in \partial_\ve h(x)$, where $\ve=h(x)-[h(y)+\inner{w}{x-y}]\ge 0$.

\end{lemma}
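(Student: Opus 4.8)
The plan is to derive both items directly from the definition of the $\ve$-enlargement $\partial_{\ve}$; no tool beyond the defining inequalities is required, so this will be a short verification rather than an argument.

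For item (i), I would fix $x\in\mathbb{E}$ and $\ve,\mu\ge 0$, and take an arbitrary element $w$ of $\partial_{\ve}h(x)+\partial_{\mu}f(x)$, written as $w=u+v$ with $u\in\partial_{\ve}h(x)$ and $v\in\partial_{\mu}f(x)$. By definition, for every $y\in\mathbb{E}$,
\[
h(y)\ge h(x)+\inner{u}{y-x}-\ve \quad\text{and}\quad f(y)\ge f(x)+\inner{v}{y-x}-\mu .
\]
Adding these two inequalities and using linearity of $\inner{\cdot}{\cdot}$ in the first argument yields $(h+f)(y)\ge (h+f)(x)+\inner{u+v}{y-x}-(\ve+\mu)$ for all $y\in\mathbb{E}$, which is exactly the statement that $w=u+v\in\partial_{\ve+\mu}(h+f)(x)$. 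Since $w$ was an arbitrary element of the left-hand side, the inclusion follows.

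For item (ii), I would start from $w\in\partial h(y)$, so that $h(z)\ge h(y)+\inner{w}{z-y}$ for all $z\in\mathbb{E}$. Evaluating this at $z=x$ shows that $\ve:=h(x)-[h(y)+\inner{w}{x-y}]\ge 0$, so $\ve$ is a well-defined nonnegative number. Then, for any $z\in\mathbb{E}$, I would split $\inner{w}{z-y}=\inner{w}{z-x}+\inner{w}{x-y}$ in the subgradient inequality to get
\[
h(z)\ge h(y)+\inner{w}{x-y}+\inner{w}{z-x}=h(x)-\ve+\inner{w}{z-x},
\]
that is, $h(z)\ge h(x)+\inner{w}{z-x}-\ve$ for all $z\in\mathbb{E}$, which is precisely $w\in\partial_{\ve}h(x)$.

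Both verifications are immediate once the definitions are unwound, so I do not expect any genuine obstacle here; the only point deserving a moment of care is the nonnegativity of $\ve$ in item (ii), which is secured simply by evaluating the exact subgradient inequality at the point $x$ itself.
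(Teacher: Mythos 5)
Your proof is correct; the paper states this lemma without proof as a standard fact about the $\ve$-subdifferential, and your direct unwinding of the definitions (adding the two enlarged subgradient inequalities for (i), and splitting $\inner{w}{z-y}=\inner{w}{z-x}+\inner{w}{x-y}$ after evaluating the exact inequality at $z=x$ for (ii)) is exactly the intended argument.
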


The following notion of an approximate solution of problem~\cref{pr1} is used in the complexity analysis of our methods.

\begin{definition}\label{def:approxsol}
Given a tolerance $\rho>0$, a point $ x\in \mathbb{R}^n$ is said to be a $\rho$-approximate solution of problem~\cref{pr1}
with residues $(v,\ve)\in \mathbb{R}^n\times\mathbb{R}_+$ if and only if
\[
v\in \partial_{\ve} F(x), \quad \max\{\|v\|,\ve\}\leq \rho.\label{approxSol1}
\]
\end{definition}

We end this section by presenting some elementary properties on the extrapolate sequences used by the proposed methods.

\begin{lemma}\label{aux-1}
The positive sequence $(t_k)_{k\in \NN}$ generated by \cref{tk-FISTA} satisfies, for all $k\in\mathbb{N}$,
    \item [ {\bf (i)}] $\displaystyle\frac{1}{t_k}\le \frac{2}{k+1}$,
    \item [ {\bf (ii)}] $\displaystyle t^2_{k+1}-t_{k+1}=t^2_k$,
     \item [ {\bf (iii)}] $\displaystyle0\le \frac{t_k-1}{t_{k+1}}\le 1$.
\end{lemma}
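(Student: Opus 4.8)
The plan is to verify the three items in the order (ii), (iii), (i), since (ii) is a direct algebraic identity, (iii) follows almost immediately from (ii), and (i) is an induction that uses (ii).

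\medskip

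\textbf{Item (ii).} I would start from the defining recurrence \cref{tk-FISTA}, which can be rewritten as $2t_{k+1}-1 = \sqrt{1+4t_k^2}$. Squaring both sides gives $4t_{k+1}^2 - 4t_{k+1} + 1 = 1 + 4t_k^2$, and dividing by $4$ yields $t_{k+1}^2 - t_{k+1} = t_k^2$, as claimed. (One should note in passing that $2t_{k+1}-1 \ge 0$, so squaring is reversible; this is clear since $t_k \ge 1$ for all $k$ by an easy induction from $t_1 = 1$, which also shows the sequence is positive.)

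\medskip

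\textbf{Item (iii).} Since $t_k \ge 1$, we have $t_k - 1 \ge 0$, and since $t_{k+1} > 0$ we get $(t_k-1)/t_{k+1} \ge 0$. For the upper bound, using (ii) we have $t_{k+1}^2 = t_k^2 + t_{k+1} \ge t_k - 1 + t_{k+1}$ (because $t_k^2 \ge t_k \ge t_k - 1$); more directly, $t_{k+1} = t_{k+1}^2 - t_k^2 + \big(t_{k+1} - (t_{k+1}^2 - t_k^2)\big)$ — I would instead simply argue $t_{k+1} \ge t_k$ (immediate from (ii), since $t_{k+1}^2 = t_k^2 + t_{k+1} > t_k^2$), hence $t_k - 1 < t_k \le t_{k+1}$, giving $(t_k-1)/t_{k+1} \le 1$.

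\medskip

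\textbf{Item (i).} This is the only step requiring real work, and it is the main (though still routine) obstacle. I would prove $t_k \ge (k+1)/2$ for all $k \in \mathbb{N}$ by induction. The base case $k=1$ gives $t_1 = 1 \ge 1$. For the inductive step, assume $t_k \ge (k+1)/2$. From (ii) and the quadratic formula, $t_{k+1} = \tfrac12\big(1 + \sqrt{1 + 4t_k^2}\big) \ge \tfrac12\big(1 + \sqrt{4t_k^2}\big) = \tfrac12 + t_k \ge \tfrac12 + \tfrac{k+1}{2} = \tfrac{k+2}{2}$, which is the claim for $k+1$. Taking reciprocals yields $1/t_k \le 2/(k+1)$. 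The key inequality here is just $\sqrt{1+4t_k^2} \ge 2t_k$, which is trivial.

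Overall there is no genuine difficulty; the proof is three short computations plus one one-line induction, and the only thing to be careful about is recording at the outset that $t_k \ge 1$ and $t_k > 0$ for all $k$, which everything else leans on.
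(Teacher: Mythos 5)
Your proof is correct: (ii) by squaring the recurrence, (iii) from $t_k\ge 1$ and the monotonicity $t_{k+1}\ge t_k$ implied by (ii), and (i) by the induction $t_{k+1}\ge t_k+\tfrac12$ from $\sqrt{1+4t_k^2}\ge 2t_k$. The paper states this lemma without proof (it is the standard FISTA estimate going back to Beck--Teboulle), and your argument is exactly the standard one, so there is nothing to reconcile.
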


\begin{lemma}\label{aux-12} Let $\lambda\ge 1$ be given. The sequence $(\tau_k)_{k\in \NN}$ recursively defined by
\begin{equation}\label{tk-kappa}
\tau_0\coloneqq 0, \quad \mbox{and}\quad \tau_{k+1}\coloneqq \tau_{k}+\frac{ \lambda+\sqrt{\lambda^2+4\lambda \tau_{k}}}{2},
\end{equation}
satisfies, for all $k\in \mathbb{N}$,
    \item [ {\bf (i)}] $\tau_{k+1}>\tau_{k}$ and
 $\displaystyle \frac{\tau_{k+1}}{{(\tau_{k+1}-\tau_{k}})^2}=\frac1\lambda$,
    \item [ {\bf (ii)}] $ \displaystyle \tau_k\geq \frac{\lambda}{4}k^2$.
\end{lemma}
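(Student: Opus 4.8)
The plan is to reduce everything to a single algebraic identity obtained by squaring the recursion, and then to linearize via the substitution $u_k\coloneqq\sqrt{\tau_k}$.

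First I would check, by an easy induction, that $\tau_k\ge 0$ for all $k\in\mathbb{N}$: this holds at $\tau_0=0$, and if $\tau_k\ge 0$ then the increment $\tfrac12\bigl(\lambda+\sqrt{\lambda^2+4\lambda\tau_k}\bigr)$ is strictly positive since $\lambda\ge 1>0$, so in fact $\tau_{k+1}>\tau_k\ge 0$. In particular $\tau_k>0$ for $k\ge 1$ (note $\tau_1=\lambda$), and $(\tau_k)_{k\in\mathbb{N}}$ is increasing, which is the first assertion of (i). Next, writing $d_{k+1}\coloneqq\tau_{k+1}-\tau_k=\tfrac12\bigl(\lambda+\sqrt{\lambda^2+4\lambda\tau_k}\bigr)$, I would rearrange to $2d_{k+1}-\lambda=\sqrt{\lambda^2+4\lambda\tau_k}$ (the left-hand side is positive) and square, getting $4d_{k+1}^2-4\lambda d_{k+1}+\lambda^2=\lambda^2+4\lambda\tau_k$, hence $d_{k+1}^2=\lambda d_{k+1}+\lambda\tau_k=\lambda(\tau_k+d_{k+1})=\lambda\tau_{k+1}$. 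Since $\tau_{k+1}>0$ this gives exactly $\tau_{k+1}/(\tau_{k+1}-\tau_k)^2=1/\lambda$, completing (i).

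For (ii) I would use (i) in the form $\tau_{k+1}-\tau_k=\sqrt{\lambda\,\tau_{k+1}}$ and set $u_k\coloneqq\sqrt{\tau_k}\ge 0$, so that $u_{k+1}^2-u_k^2=\sqrt{\lambda}\,u_{k+1}$. Factoring the left-hand side yields $(u_{k+1}-u_k)(u_{k+1}+u_k)=\sqrt{\lambda}\,u_{k+1}$. Because $(u_k)_{k\in\mathbb{N}}$ is nondecreasing, $u_{k+1}+u_k\le 2u_{k+1}$, and for $k\ge 1$ we have $u_{k+1}>0$, so dividing gives $u_{k+1}-u_k\ge\tfrac12\sqrt{\lambda}$; for $k=0$ the inequality $u_1-u_0=\sqrt{\lambda}\ge\tfrac12\sqrt{\lambda}$ holds directly. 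Telescoping from $0$ to $k-1$ and using $u_0=0$ then yields $u_k\ge\tfrac{k}{2}\sqrt{\lambda}$, i.e. $\tau_k\ge\tfrac{\lambda}{4}k^2$.

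I do not expect a genuine obstacle here. The one idea that makes the argument clean is recognizing that squaring the recursion collapses it to $(\tau_{k+1}-\tau_k)^2=\lambda\tau_{k+1}$, and that the change of variables $u_k=\sqrt{\tau_k}$ then turns a nonlinear recursion into a telescoping lower bound on increments; the only mild care needed is the $k=0$ edge case (where $u_0=0$) when dividing by $u_{k+1}+u_k$, which is harmless since $\tau_1=\lambda>0$ and the bound can be verified by hand there.
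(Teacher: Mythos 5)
Your proof is correct and follows essentially the same route as the paper: the key step in both is the one-step bound $\sqrt{\tau_{k+1}}\ge\sqrt{\tau_k}+\tfrac{\sqrt{\lambda}}{2}$, which is then telescoped from $\tau_0=0$ and squared. The only cosmetic difference is that you obtain this bound from the exact identity $(\tau_{k+1}-\tau_k)^2=\lambda\tau_{k+1}$ of part (i) together with the factoring $u_{k+1}^2-u_k^2=(u_{k+1}-u_k)(u_{k+1}+u_k)$, whereas the paper bounds the recursion directly via $\sqrt{\lambda^2+4\lambda\tau_k}\ge 2\sqrt{\lambda\tau_k}$ and completes the square; your write-up of (i), which the paper leaves as ``follows from the definition,'' is also the standard squaring argument.
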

\begin{proof}
The first item follows from definition and the fact that $\tau_k\ge 0$ for all $k\in \mathbb{N}$.
To prove the second item, we first note that
\[
\tau_{k+1} =
\tau_{k}+\frac{\lambda+\sqrt{\lambda^2+4\lambda \tau_{k}}}{2} \geq
\tau_{k}+\frac{ \lambda+2\sqrt{\lambda \tau_{k}}}{2} \geq
\Bigg( \sqrt{\tau_{k}}+\frac{\sqrt{\lambda }}{2} \Bigg)^2,
\]
which implies
\(
\displaystyle\sqrt{\tau_{k+1}}\geq\sqrt{\tau_{k}}+\frac{\sqrt{\lambda }}{2}.
\)
Therefore,
\[
\sqrt{\tau_{k}}\geq\sqrt{\tau_{0}}+\sum_{i=1}^k\frac{\sqrt{\lambda }}{2}=k\frac{\sqrt{\lambda }}{2}.
\]
Squaring both sides, we obtain the second item.
\end{proof}

\section{Inexact criteria with relative error rules}\label{Inexact-Rules}
In this section we present two inexact rules with relative error criteria: the {\em inexact relative rule} (IR Rule) and the {\em inexact extra-step relative rule} (IER Rule). These rules will be used in the two proposed methods in the following two sections.
\medskip
\begin{center}\fbox{\begin{minipage}[b]{0.981\textwidth}
\begin{ex}[IR Rule]
Given $\tau \in  (0,1]$ and $\alpha\in [0,{(1-\tau)L}/{\tau}]$, we define the set-value mapping
$\mathcal{J}^{\alpha,\tau}\colon\mathbb{E}\times \mathbb{R}_{++}\rightrightarrows \mathbb{E}\times \mathbb{E}\times \mathbb{R}_+$ as
\[
\!\mathcal{J}^{\alpha,\tau}\!\!\left(y,\frac1 L\right)\!\!
\coloneqq\!\!
\left\{\!(x,v,\ve)\in \mathbb{E}\times \mathbb{E}\times \mathbb{R}_+ \!\left|
\!\!
\begin{array}{c}
v \in \partial_{\ve}g( x) + \frac{L}\tau(x - y) + \nabla f(y),\\
\\
\|\tau v\|^2 + 2\tau \ve L \leq L[(1 - \tau)L - \alpha\tau]\|x - y\|^2
\end{array}
\!\!\!\!
\right.
\!\right\}\!.
\]
\end{ex}
\end{minipage}}\end{center}
\medskip

Note that the IR Rule consists of (possibly many) specific outputs.
Next we discuss some particular output possibilities including exact and inexact proximal solutions with relative errors.

\begin{remark}\label{Rk1} By setting $v=0$ in the \textup{IR Rule}, we recover the inexact solution of~\cref{inexact-Sum}, but with $L/\tau$ in place of $L$.
In this case, the inclusion is similar to the one in \cite{Villa-Salzo-Luca-Verri-2013}; however, the condition on $\ve$ is different from the exogenous one in \cite{Villa-Salzo-Luca-Verri-2013}.
If $\tau=1$ in the \textup{IR Rule}, then $\alpha=0$, $\ve=0$, and $v=0$, implying that
\[
\mathcal{J}^{0,1}\left(y,1/L\right) =
\Bigg\{
\bigg(
\prox_{\frac{1}{L} g}
\Big(y-\frac{1}{L} \nabla f(y)\Big),
0, 0
\bigg)
\Bigg\}
,
\]
which agrees with the exact {\em prox} used in \eqref{xk-FISTA}.
\end{remark}

\medskip
\begin{center}\fbox{\begin{minipage}[b]{0.981\textwidth}
\begin{ex}[IER Rule]
Given $\sigma \in [0,1]$ and $\alpha>1/L$, we define  the set-value mapping $\mathcal{J}_e^{\alpha,\sigma}\colon\mathbb{E}\times \mathbb{R}_{+}\rightrightarrows \mathbb{E}\times \mathbb{E}\times \mathbb{R}_+$ as
\[
\mathcal{J}_e^{\alpha,\sigma}\left(y,\frac 1 L\right)\coloneqq
\left\{
(\tilde x,v,\ve)\in \mathbb{E}\times \mathbb{E}\times \mathbb{R}_+ \,\left|\,
\begin{array}{c}
v \in \partial_{\ve}g(\tilde x) + {L}(\tilde x - y) + \nabla f(y), \\
\\
\|\alpha v+ \tilde x-y\|^2+2\alpha\ve \leq \sigma^2\|\tilde x-y\|^2\end{array}\right.
\right\}
\!.
\]
\end{ex}
\end{minipage}}\end{center}
\medskip

\begin{remark} \label{Rk2}
By fixing  $v=(y-\tilde x)/\alpha$ in the \textup{IER Rule}, we recover the inexact solution of \cref{inexact-Sum}, but with $(1+\alpha L)/\alpha$ in place of $L$. However, the condition on $\ve$ is different from the exogenous one in \cite{Villa-Salzo-Luca-Verri-2013}.
If $\sigma=0$ in the \textup{IER Rule}, then $\ve=0$ and $v=(y-\tilde x)/\alpha$, where
\[
\tilde x = \prox_{\frac{\alpha}{1+\alpha L} g}
\bigg(
y-\frac{\alpha}{1+\alpha L}\nabla f(y)
\bigg),
\]
implying that
\begin{align*}
\mathcal{J}_e^{\alpha,0}\left(y,1/L\right)
&=
\Bigg\{
\bigg(
\tilde x,
\frac{y - \tilde x}\alpha, 0
\bigg)
\Bigg\}.
\end{align*}
\end{remark}

It is worth pointing out that  the inexact relative rules defined above are nonempty since
 the inclusions  \[0 \in \partial g( x) + \frac{L}\tau(x - y) + \nabla f(y), \quad 0 \in \partial g(\tilde x) +\frac{(1+\alpha L)}{\alpha}(\tilde x - y) + \nabla f(y)\]  always have  solutions, which implies that
\[
\bigg(
\prox_{\frac{\tau}{L} g}
\Big(y-\frac{\tau}{L} \nabla f(y)\Big),
0, 0
\bigg)
\in
\mathcal{J}^{\alpha,\tau}(y, 1/L), \quad \tau \in (0,1], \quad \alpha\in [0,{(1-\tau)L}/{\tau}],
\]
and
\[
\bigg(
\tilde x,
\frac{y - \tilde x}\alpha, 0
\bigg)
\in
\mathcal{J}^{\alpha,\sigma}_e(y, 1/L), \quad \alpha > 1/L, \quad \sigma \in [0,1].
\]

\section{Inexact accelerated method}\label{PFA}

We now formally present our inexact accelerated method.
\medskip
\begin{center}\fbox{\begin{minipage}[b]{0.981\textwidth}
\begin{algorithm}[I-FISTA]\label{alg:1}
Let $x_{0} \in \mathbb{E}$, $\tau\in(0,1]$, and $\alpha\in [0,L(1-\tau)/\tau]$ be given.
Set $y_1:=x_0$, $t_1:=1$, and iterate
\begin{gather}
\text{find}\ (x_k,v_k,\ve_k)\in \mathcal{J}^{\alpha,\tau}(y_k,1/L),\label{deftriple}\\
 t_{k+1}\coloneqq \frac{1+\sqrt{1+4t_k^2}}{2}, \label{deft}\\
 y_{k+1}\coloneqq x_{k}-\left(\frac{ t_k}{t_{k+1}}\right)\frac{\tau}{L}v_{k}+\left(\frac{t_{k}-1}{t_{k+1}}\right)( x_{k}-x_{k-1}).\label{defyy}
\end{gather}
\end{algorithm}
\end{minipage}}\end{center}
\medskip

Note that the triple $(x_k,v_k,\ve_k)$ in the iterative step of {I-FISTA} satisfies
\begin{gather}\label{IR-cond-k-inc}
v_k\in \partial_{\ve_k}g( x_k)+\frac{L}\tau( x_k- y_k)+\nabla f(y_k), \\\label{IR-cond-k-ine}
\|\tau v_k\|^2+2\tau \ve_k L\leq L[(1-\tau) L-\alpha\tau]\| x_k-y_k\|^2.
\end{gather}
If $\tau=1$, then we have $\ve_k = 0$ and $v_k = 0$, giving us
\begin{gather*}
0 \in \partial g(x_k) + L(x_k - y_k) + \nabla f(y_k), \\
y_{k+1} = x_k + \left(\frac{t_{k}-1}{t_{k+1}}\right)(x_{k} - x_{k-1});
\end{gather*}
hence, {I-FISTA} recovers the classical FISTA.

Next we present a key result for our analysis.

\begin{proposition}\label{prop1-2}
For every $x\in \mathbb{E}$ and $ k\in \NN$, we have
\[
F(x)-F( x_{k})\ge \frac{L}{2\tau}\left[\left\| x_{k}-x-\frac{\tau}{L}v_{k}\right\|^2-\|y_{k}-x\|^2\right]+\frac{\alpha}2\|y_{k}- x_{k}\|^2.
\]
\end{proposition}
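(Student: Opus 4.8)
The plan is to combine three first-order inequalities — convexity of $f$, the descent inequality \eqref{eqlq} for $f$, and the $\ve_k$-subgradient inequality for $g$ — and then to show that the resulting estimate is \emph{exactly} the claimed bound, up to the error inequality \eqref{IR-cond-k-ine} built into the IR Rule. First I would use the inclusion \eqref{IR-cond-k-inc} to introduce $u_k \coloneqq v_k - \tfrac{L}{\tau}(x_k - y_k) - \nabla f(y_k) \in \partial_{\ve_k} g(x_k)$, so that by the definition of the $\ve_k$-subdifferential, $g(x) \ge g(x_k) + \inner{u_k}{x - x_k} - \ve_k$. Convexity of $f$ gives $f(x) \ge f(y_k) + \inner{\nabla f(y_k)}{x - y_k}$, while \eqref{eqlq} applied between $y_k$ and $x_k$ gives $f(x_k) \le f(y_k) + \inner{\nabla f(y_k)}{x_k - y_k} + \tfrac{L}{2}\|x_k - y_k\|^2$. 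Adding the three inequalities, the terms in $f(y_k)$ cancel, the $\nabla f(y_k)$ terms recombine, and substituting $u_k + \nabla f(y_k) = v_k - \tfrac{L}{\tau}(x_k - y_k)$ yields
\[
F(x) - F(x_k) \ge \Inner{v_k - \tfrac{L}{\tau}(x_k - y_k)}{x - x_k} - \frac{L}{2}\|x_k - y_k\|^2 - \ve_k .
\]

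Next I would expand both sides around the vector $x_k - y_k$. In the inner product above I would write $x_k - x = (x_k - y_k) + (y_k - x)$; on the target side I would use $\|x_k - x\|^2 - \|y_k - x\|^2 = \|x_k - y_k\|^2 + 2\inner{x_k - y_k}{y_k - x}$ together with $\bigl\|x_k - x - \tfrac{\tau}{L}v_k\bigr\|^2 = \|x_k - x\|^2 - \tfrac{2\tau}{L}\inner{x_k - x}{v_k} + \tfrac{\tau^2}{L^2}\|v_k\|^2$. The cross terms $\inner{x_k - x}{v_k}$ and $\inner{x_k - y_k}{y_k - x}$ then appear with matching coefficients on both sides and cancel identically, so the desired inequality reduces to the purely quadratic statement
\[
\Bigl(\frac{L}{2\tau} - \frac{L}{2} - \frac{\alpha}{2}\Bigr)\|x_k - y_k\|^2 \ge \ve_k + \frac{\tau}{2L}\|v_k\|^2 .
\]
Multiplying through by $2\tau L$ and using $\tfrac{L}{2\tau} - \tfrac{L}{2} - \tfrac{\alpha}{2} = \tfrac{L(1-\tau)-\alpha\tau}{2\tau}$ turns this into $L\bigl[(1-\tau)L - \alpha\tau\bigr]\|x_k - y_k\|^2 \ge \|\tau v_k\|^2 + 2\tau\ve_k L$, which is precisely \eqref{IR-cond-k-ine}. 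This closes the argument.

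The proof uses nothing beyond convexity of the two summands of $F$, the $L$-Lipschitz descent inequality \eqref{eqlq}, and the IR Rule, so there is no genuine obstacle — only careful bookkeeping. The one point that needs attention is to pair the two cross terms correctly so that the estimate collapses onto \eqref{IR-cond-k-ine} exactly, and not onto a strictly stronger condition; getting the coefficient $\tfrac{L(1-\tau)-\alpha\tau}{2\tau}$ right is exactly what links the feasible range $\alpha \in [0, L(1-\tau)/\tau]$ imposed in the IR Rule to the validity of the proposition.
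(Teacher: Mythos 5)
Your proposal is correct and follows essentially the same route as the paper: the $\ve_k$-subgradient inequality from \cref{IR-cond-k-inc}, convexity of $f$, the descent inequality \cref{eqlq}, the algebraic identity splitting the cross terms, and finally the IR error bound \cref{IR-cond-k-ine}, which (after clearing the factor $2\tau L$) is exactly your residual quadratic condition with coefficient $\tfrac{(1-\tau)L-\alpha\tau}{2\tau}$. The only difference is bookkeeping order — you apply \cref{eqlq} at the outset and verify the leftover inequality collapses onto \cref{IR-cond-k-ine}, while the paper carries those terms along and invokes \cref{eqlq} and the error bound midway before completing the square — so the arguments are the same in substance.
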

\begin{proof}
Let $x \in \mathbb{E}$ and $k \in \mathbb{N}$. Note first that from \cref{IR-cond-k-inc}, \[v_{k}+\frac{L}{\tau}{({y}_{k}- x_{k})}-\nabla f({y}_{k})\in\partial_{\ve_{k}} g( x_{k}).\]
From the definition of $\partial_\ve g$, we have
\begin{equation}\label{eq2-2}
g(x)-g( x_{k})\ge \Big\langle v_{k}+\frac{L}{\tau}({y}_{k}- x_{k})-\nabla f({y}_{k}),x- x_{k}\Big\rangle-\ve_{k}.
\end{equation}
Moreover, the convexity of $f$ implies
\begin{equation}\label{eq3-2}
f(x)-f(y_{k})\ge \langle\nabla f(y_{k}), x-y_{k}\rangle.
\end{equation}
Adding \cref{eq2-2} and \cref{eq3-2}, using $F=f+g$, and simplifying, we
get
\begin{align*}
F(x)-F( x_{k})& \geq f(y_{k})- f(x_{k})+ \langle\nabla f(y_{k}), x_{k}-y_{k}\rangle\\
&\quad- \frac{L}\tau\inner{{y}_{k}- x_{k}}{ x_{k}-x}+\inner{v_{k}}{x-x_{k}}-\ve_{k}.
\end{align*}
Combining the above inequality with the following identity
\[-\inner{{y}_{k}- x_{k}}{ x_{k}-x} =\frac{1}{2}\left[\|y_{k}- x_{k}\|^2 +\| x_{k}-x\|^2-\|y_{k}-x\|^2\right],\]
we get that
\begin{align*}
F(x)-F( x_{k})&\geq f(y_{k})- f( x_{k})+
\langle\nabla f(y_{k}), x_{k}-y_{k}\rangle
-\ve_{k}\\
&\quad+ \frac{L}{2\tau}\|y_{k}- x_{k}\|^2
+ \frac{L}{2\tau}\left[\| x_{k}-x\|^2-\|y_{k}-x\|^ 2\right]+\inner{v_{k}}{x- x_{k}}\\
&=f(y_{k})- f( x_{k})+ \langle\nabla f(y_{k}), x_{k}-y_{k}\rangle+ \frac{L}{2}\|y_{k}- x_{k}\|^2\\
&\quad+\frac{(1-\tau)L}{2\tau}\|y_{k}- x_{k}\|^2+ \frac{L}{2\tau}\left[\| x_{k}-x\|^2-\|y_{k}-x\|^ 2\right]\\
&\quad+\inner{v_{k}}{x- x_{k}}-\ve_{k}.
\end{align*}
Then, using \cref{eqlq} together with the Lipschitz continuity of $\nabla f$, we have
\begin{align*}
F(x)-F( x_{k})&\geq \frac{(1-\tau)L}{2\tau}\|y_{k}- x_{k}\|^2+\frac{L}{2\tau}\left[\| x_{k}-x\|^2-\|y_{k}-x\|^ 2\right]\\
&\quad+\inner{v_{k}}{x- x_{k}}-\ve_{k}.
\end{align*}
On the other hand, the error condition of {IR Rule}, given in \cref{IR-cond-k-ine}, implies
\[
 \frac{(1-\tau)L}{2\tau}\|y_{k}- x_{k}\|^2-\ve_{k}\geq \frac{\tau}{2L}\| v_{k}\|^2+\frac{\alpha}2\|y_{k}- x_{k}\|^2.
\]
Hence, combining the last two inequalities, we obtain
\begin{align*}
F(x)-F( x_{k})&\ge \frac{L}{2\tau}\left[\| x_{k}-x\|^2-\|y_{k}-x\|^2\right]+\inner{v_{k}}{x- x_{k}}\\
&\quad+\frac{\tau}{2L}\| v_{k}\|^2+\frac{\alpha}2\|y_{k}- x_{k}\|^2,
\end{align*}
which gives us
\begin{align*}
F(x)-F( x_{k})&\ge\frac{L}{2\tau}\left[\| x_{k}-x\|^2+\frac{2\tau}{L}\inner{v_{k}}{x- x_{k}}+\left\| \frac{\tau}{L} v_{k}\right\|^2-\|y_{k}-x\|^2\right]\\
&\quad+\frac{\alpha}2\|y_{k}- x_{k}\|^2,
\end{align*}
implying that
\[
F(x)-F( x_{k})\ge \frac{L}{2\tau}\left[\left\| x_{k}-x-\frac{\tau}{L}v_{k}\right\|^2-\|y_{k}-x\|^2\right]+\frac{\alpha}2\|y_{k}- x_{k}\|^2,
\]
as desired.
\end{proof}

\begin{theorem}\label{l-rate-2}
Let $(x_k,y_k,t_k)_{k\in \mathbb{N}}$ be the sequence
generated by \textup{I-FISTA}. Then, for all $k\in\mathbb{N}$,
\begin{equation}\label{nn23}
\frac{2\tau}{L}[t_{k}^2( F( x_k)-F^*)-t_{k+1}^2( F( x_{k+1})-F^*)]\ge \|u_{k+1}\|^2- \|u_k\|^2+\frac{\tau \alpha{t_{k+1}^2}}{L}\|y_{k+1}-x_{k+1}\|^2,
\end{equation}
where
\begin{equation}\label{def:uk}
u_k \coloneqq t_{k}( x_{k}- x_{k-1})-\frac{\tau}{L}t_{k}v_{k}+( x_{k-1}-x_*), \quad x_*\in S_*.
\end{equation}
\end{theorem}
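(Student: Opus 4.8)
The plan is to apply Proposition~\ref{prop1-2} twice with two carefully chosen points, combine the two resulting inequalities with weights reflecting the FISTA extrapolation, and then recognize the right-hand side as a telescoping difference of squared norms of the auxiliary vectors $u_k$. First I would invoke Proposition~\ref{prop1-2} at iteration $k+1$ with the choice $x = x_k$, and also with the choice $x = x_*$ for some $x_* \in S_*$. This yields two lower bounds on $F(x_k) - F(x_{k+1})$ and on $F^* - F(x_{k+1})$, respectively, each of the form $\frac{L}{2\tau}\big[\|x_{k+1} - x - \frac{\tau}{L}v_{k+1}\|^2 - \|y_{k+1} - x\|^2\big] + \frac{\alpha}{2}\|y_{k+1} - x_{k+1}\|^2$.

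Next I would multiply the first inequality by $t_{k+1}^2 - t_{k+1} = t_k^2$ (using Lemma~\ref{aux-1}(ii)) and the second by $t_{k+1}$, then add them. On the left this produces exactly $t_k^2(F(x_k) - F^*) - t_{k+1}^2(F(x_{k+1}) - F^*)$ after using $t_k^2 + t_{k+1} = t_{k+1}^2$ to recombine the $F(x_{k+1})$ terms and $F^*$ terms. On the right, the $\frac{\alpha}{2}\|y_{k+1}-x_{k+1}\|^2$ contributions combine with total weight $t_k^2 + t_{k+1} = t_{k+1}^2$, producing the $\frac{\tau\alpha t_{k+1}^2}{L}\|y_{k+1}-x_{k+1}\|^2$ term after scaling by $\frac{2\tau}{L}$; this matches the last term in \cref{nn23}. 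The heart of the matter is the remaining bracketed quadratic terms: I need to show that
\[
(t_{k+1}^2 - t_{k+1})\Big\|x_{k+1} - x_k - \tfrac{\tau}{L}v_{k+1}\Big\|^2 + t_{k+1}\Big\|x_{k+1} - x_* - \tfrac{\tau}{L}v_{k+1}\Big\|^2 - (t_{k+1}^2 - t_{k+1})\|y_{k+1} - x_k\|^2 - t_{k+1}\|y_{k+1} - x_*\|^2
\]
equals $\|u_{k+1}\|^2 - \|u_k\|^2$ up to the factor $\frac{L}{2\tau}\cdot\frac{2\tau}{L} = 1$ absorbed by the scaling.

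To verify this identity I would use the elementary relation $b\|a\|^2 + c\|d\|^2 = \frac{bc}{b+c}\|a-d\|^2 + (b+c)\big\|\frac{b}{b+c}a + \frac{c}{b+c}d\big\|^2$ on the two positive terms with $b = t_{k+1}^2 - t_{k+1} = t_k^2$, $c = t_{k+1}$, $a = x_{k+1} - x_k - \frac{\tau}{L}v_{k+1}$, $d = x_{k+1} - x_* - \frac{\tau}{L}v_{k+1}$, and the same relation on the two subtracted terms with $a = y_{k+1} - x_k$, $d = y_{k+1} - x_*$. In both applications $b + c = t_{k+1}^2$ and $\frac{bc}{b+c} = \frac{t_k^2 t_{k+1}}{t_{k+1}^2}$, so the two ``cross'' terms both carry the factor $\frac{t_k^2}{t_{k+1}}$ times $\|x_k - x_*\|^2$ (the vector $a - d$ is $x_k - x_*$ in the first case and $x_* - x_k$ in the second, with the same squared norm) and hence cancel. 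What survives is $t_{k+1}^2\big\|x_{k+1} - \frac{\tau}{L}v_{k+1} - \frac{t_{k+1}-1}{t_{k+1}}x_k - \frac{1}{t_{k+1}}x_*\big\|^2 - t_{k+1}^2\big\|y_{k+1} - \frac{t_{k+1}-1}{t_{k+1}}x_k - \frac{1}{t_{k+1}}x_*\big\|^2$. Multiplying through, the first norm is $\big\|t_{k+1}x_{k+1} - \frac{\tau}{L}t_{k+1}v_{k+1} - (t_{k+1}-1)x_k - x_*\big\|^2 = \|u_{k+1}\|^2$ directly from \cref{def:uk}. For the second term I would substitute the update \cref{defyy} for $y_{k+1}$, namely $t_{k+1}y_{k+1} = t_{k+1}x_k - \frac{\tau}{L}t_k v_k + (t_k - 1)(x_k - x_{k-1})$, so that $t_{k+1}y_{k+1} - (t_{k+1}-1)x_k - x_* = t_k(x_k - x_{k-1}) - \frac{\tau}{L}t_k v_k + (x_{k-1} - x_*) = u_k$. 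Hence the surviving negative term is exactly $-\|u_k\|^2$, completing the identity and the proof.

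The main obstacle I anticipate is purely bookkeeping: correctly tracking the weights $t_k^2 = t_{k+1}^2 - t_{k+1}$ through the two invocations of Proposition~\ref{prop1-2} and making sure the ``cross'' terms $\frac{t_k^2}{t_{k+1}}\|x_k - x_*\|^2$ from the positive and negative halves of the quadratic have matching signs so they cancel; the substitution of $y_{k+1}$ from \cref{defyy} at the very end must also be done with the precise coefficients so that the telescoped vector collapses to $u_k$ as defined in \cref{def:uk}. None of the individual manipulations is deep, but an arithmetic slip in the coefficients would break the telescoping structure that the rest of the complexity analysis relies on.
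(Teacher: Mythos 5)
Your proposal is correct and follows essentially the same route as the paper's proof: apply \cref{prop1-2} at $k+1$ with $x=x_k$ and $x=x_*$, combine with weights $t_k^2=t_{k+1}^2-t_{k+1}$ and $t_{k+1}$ via \cref{aux-1}(ii), and then show the remaining quadratic terms collapse to $\|u_{k+1}\|^2-\|u_k\|^2$ using \cref{defyy} and \cref{def:uk}. The only difference is cosmetic: you verify the collapsing identity through the weighted-average (variance) decomposition $b\|a\|^2+c\|d\|^2=\tfrac{bc}{b+c}\|a-d\|^2+(b+c)\|\tfrac{b}{b+c}a+\tfrac{c}{b+c}d\|^2$, whereas the paper expands $\|u_{k+1}-(x_k-x_*)\|^2-\|u_k-(x_k-x_*)\|^2$ directly, and both computations check out.
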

\begin{proof}
Let $x_* \in S_*$.
Using \cref{prop1-2} with $k+1$ in place of $k$ and at $x=x_*$ and $x= x_k$, we have
\begin{align*}
-(F( x_{k+1})-F^*) &\ge \frac{L}{2\tau}\left[\left\| x_{k+1}-x_*-\frac{\tau}{L}v_{k+1}\right\|^2-\|y_{k+1}-x_*\|^2\right]\\
&\quad+\frac{\alpha}2\|y_{k+1}- x_{k+1}\|^2,\\
F(x_k)-F( x_{k+1})&\ge \frac{L}{2\tau}\left[\left\| x_{k+1}-x_k-\frac{\tau}{L}v_{k+1}\right\|^2-\|y_{k+1}-x_k\|^2\right]\\
&\quad+\frac{\alpha}2\|y_{k+1}- x_{k+1}\|^2.
\end{align*}
By multiplying the second inequality by $(t_{k+1}-1)$ and adding it to the first inequality above, we obtain
\begin{align*}
(t_{k+1}-1)(&F( x_{k})-F^*)-t_{k+1}(F( x_{k+1})-F^*)\\&\geq \frac{L}{2\tau}\left\| x_{k+1}-x_*-\frac{\tau}{L}v_{k+1}\right\|^2
-\frac{L}{2\tau}\|y_{k+1}-x_*\|^2+ \frac{\alpha t_{k+1}}{2}\|y_{k+1}- x_{k+1}\|^2\\
& \quad + \frac{L(t_{k+1}-1)}{2\tau}\left[\left\| x_{k+1}-x_k-\frac{\tau}{L}v_{k+1}\right\|^2-\|y_{k+1}-x_k\|^2\right].
\end{align*}
Multiplying now by $2\tau t_{k+1}/L$ in the last inequality and then using part (ii) of \cref{aux-1}  (i.e., $t_{k+1}(t_{k+1}-1)=t^2_k$), we have
\begin{align*}
\frac{2\tau}L[t_{k}^2(F( x_{k})-F^*)&-t_{k+1}^2(F( x_{k+1})-F^*)]\ge ({t_{k+1}^2-t_{k+1}})\left\| x_{k+1}-x_k-\frac{\tau}{L}v_{k+1}\right\|^2\\
&\quad- ({t_{k+1}^2-t_{k+1}})\|y_{k+1}- x_k\|^2+ {t_{k+1}}\left\| x_{k+1}-x_*-\frac{\tau}{L}v_{k+1}\right\|^2 \\
&\quad- {t_{k+1}}\|y_{k+1}-x_*\|^2+\frac{\tau \alpha{t_{k+1}^2}}{L}\|y_{k+1}- x_{k+1}\|^2,
\end{align*}
which implies that
\begin{align}\nonumber
\frac{2\tau}L[&t_{k}^2(F( x_{k})-F^*)-t_{k+1}^2(F( x_{k+1})-F^*)]\\\nonumber
&\ge \left\| t_{k+1}(x_{k+1}-x_k)-\frac{\tau}{L}t_{k+1}v_{k+1}\right\|^2-\|t_{k+1}(y_{k+1}- x_k)\|^2\\\nonumber
&\quad+ t_{k+1}\left(\|y_{k+1}- x_k\|^2- \left\|x_{k+1}-x_k-\frac{\tau}{L}v_{k+1}\right\|^2\right)\\\label{eq:5623}
&\quad+ t_{k+1}\left( \left\| x_{k+1}-x_*-\frac{\tau}{L}v_{k+1}\right\|^2-\|y_{k+1}-x_*\|^2\right)
+\frac{\tau \alpha{t_{k+1}^2}}{L}\|y_{k+1}- x_{k+1}\|^2.
\end{align}
Now, from the definitions of $y_{k+1}$ and $u_k$ in \cref{defyy} and \cref{def:uk}, respectively, we have
\begin{align*}
 &\left\| t_{k+1}(x_{k+1}-x_k)-\frac{\tau}{L}t_{k+1}v_{k+1}\right\|^2-\|t_{k+1}(y_{k+1}- x_k)\|^2\\&=\left\|u_{k+1}-(x_k-x_*)\right\|^2
 - \|u_k-(x_k-x_*)\|^2\\
 &=\left\|u_{k+1}\right\|^2- \|u_k\|^2+2\inner{u_{k}-u_{k+1}}{x_k-x_*}\\
  &=\left\|u_{k+1}\right\|^2- \|u_k\|^2+2t_{k+1}\left\langle{y_{k+1}-x_{k+1}+\frac{\tau}{L}v_{k+1}},{x_k-x_*}\right\rangle\\
  &=\left\|u_{k+1}\right\|^2- \|u_k\|^2+2t_{k+1}\left[\left\langle{y_{k+1}-x_{k}},{x_k-x_*}\right\rangle-\left\langle{x_{k+1}-x_{k}-\frac{\tau}{L}v_{k+1}},{x_k-x_*}\right\rangle\right]\\
  &=\left\|u_{k+1}\right\|^2- \|u_k\|^2+ t_{k+1}\left(\|y_{k+1}-x_*\|^2-\|y_{k+1}- x_k\|^2\right)\\
  &\quad+ t_{k+1}\left(\left\|x_{k+1}-x_k-\frac{\tau}{L}v_{k+1}\right\|^2- \left\| x_{k+1}-x_*-\frac{\tau}{L}v_{k+1}\right\|^2\right).
\end{align*}
Therefore, \cref{nn23} now follows from \eqref{eq:5623} and the last equality.
\end{proof}

\begin{theorem}
Let $d_0$ be the distance from $x_0$ to $S_*$.
Let $(x_k,y_k,t_k)_{k\in \mathbb{N}}$ be the sequence
generated by \textup{I-FISTA}. Then,  for all $k\in\mathbb{N}$,
\begin{equation}\label{nn2354}
t_k^2(F(x_{k})-F^*)+\frac\alpha2\sum_{i=1}^{k}t_{i}^2\|y_{i}-x_{i}\|^2\leq \frac{L}{2\tau}d_0^2.
\end{equation}
In particular,
\begin{equation}\label{nn2354**}
F(x_{k})-F^*\leq \frac{2L}{\tau (k+1)^2}d_0^2.
\end{equation}
\end{theorem}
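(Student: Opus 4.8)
The plan is to telescope the one-step recursion of \cref{l-rate-2} and then absorb the leftover boundary terms using \cref{prop1-2} at the first iteration. Throughout, fix $x_*\in S_*$ to be the projection of $x_0$ onto $S_*$, so that $\|x_0-x_*\|=d_0$, and let $u_k$ be defined by \eqref{def:uk} relative to this $x_*$. Note first that every term on the left-hand side of \eqref{nn2354} is nonnegative: $x_k\in\dom g$ so $F(x_k)$ is finite and $F(x_k)\ge F^*$; this is what will justify discarding terms below.

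Summing inequality \eqref{nn23} over $k=1,\dots,N-1$, both sides telescope. The left-hand side collapses to $\tfrac{2\tau}{L}\bigl[t_1^2(F(x_1)-F^*)-t_N^2(F(x_N)-F^*)\bigr]$, and the right-hand side collapses to $\|u_N\|^2-\|u_1\|^2+\tfrac{\tau\alpha}{L}\sum_{i=2}^{N}t_i^2\|y_i-x_i\|^2$. Rearranging, multiplying through by $\tfrac{L}{2\tau}$, using $t_1=1$, and discarding the nonnegative term $\tfrac{L}{2\tau}\|u_N\|^2$ on the left, I obtain
\[
t_N^2\bigl(F(x_N)-F^*\bigr)+\frac{\alpha}{2}\sum_{i=2}^{N}t_i^2\|y_i-x_i\|^2\ \le\ \bigl(F(x_1)-F^*\bigr)+\frac{L}{2\tau}\|u_1\|^2.
\]

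To control the right-hand side I apply \cref{prop1-2} with $k=1$ and $x=x_*$. Since $y_1=x_0$, $F(x_*)=F^*$, and $u_1=x_1-x_*-\tfrac{\tau}{L}v_1$ (immediate from \eqref{def:uk} with $t_1=1$), this gives $-(F(x_1)-F^*)\ge \tfrac{L}{2\tau}\bigl(\|u_1\|^2-d_0^2\bigr)+\tfrac{\alpha}{2}\|y_1-x_1\|^2$, i.e.
\[
\bigl(F(x_1)-F^*\bigr)+\frac{L}{2\tau}\|u_1\|^2+\frac{\alpha}{2}\|y_1-x_1\|^2\ \le\ \frac{L}{2\tau}d_0^2.
\]
Substituting this into the previous display and using $t_1^2=1$ yields \eqref{nn2354} for $k=N\ge2$; for $k=1$ the claim is precisely this last inequality after dropping $\tfrac{L}{2\tau}\|u_1\|^2\ge0$. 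Finally, \eqref{nn2354**} follows by discarding the nonnegative sum in \eqref{nn2354} to get $t_k^2(F(x_k)-F^*)\le \tfrac{L}{2\tau}d_0^2$, dividing by $t_k^2$, and invoking part (i) of \cref{aux-1} in the form $1/t_k^2\le 4/(k+1)^2$.

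There is no substantial obstacle here; the only points requiring care are the index bookkeeping in the telescoping sum (in particular handling the base case $k=1$ separately, where the telescoped sum is empty) and keeping track of signs so that the two "discard a nonnegative term" steps are applied on the correct side of each inequality.
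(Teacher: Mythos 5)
Your proof is correct and follows essentially the same route as the paper: telescope \eqref{nn23} over $k$, then invoke \cref{prop1-2} at $k=1$ with $x=x_*$ (using $y_1=x_0$, $t_1=1$, $u_1=x_1-x_*-\tfrac{\tau}{L}v_1$) to absorb the boundary terms into $\tfrac{L}{2\tau}d_0^2$, and finish \eqref{nn2354**} via $t_k\ge(k+1)/2$ from \cref{aux-1}. The index bookkeeping and sign handling you flag are handled exactly as in the paper's argument.
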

\begin{proof}
Summing \cref{nn23} in \cref{l-rate-2} from $k:=1$ to $k:=k-1$, and using the fact that $t_1 = 1$, we obtain
\begin{equation}\label{eq:67454}
\frac{2\tau}L t_{k}^2(F(x_{k})-F^*)+\|u_k\|^2+\frac{\tau \alpha}{L} \sum_{i=2}^{k}t_{i}^2\|y_{i}-x_{i}\|^2\leq\frac{2\tau}L(F(x_{1})-F^*)+ \|u_{1}\|^2.
\end{equation}
Now let $x_*$ be the projection of $x_0$ onto $S_*$. Then $d_0 = \|x_0 - x_*\|$.
From \cref{prop1-2} at $k=1$ and $x=x_*$, and using the fact that $y_1=x_0$,   $u_1=x_1-x_*-\frac{\tau}{L}v_{1}$, and $t_1 = 1$, we have that
\begin{align*}
\frac{2\tau}L (F(x_{1})-F^*)&\leq \|y_{1}-x_*\|^2-\left\|x_{1}-x_*-\frac{\tau}{L}v_{1}\right\|^2-\frac{\tau \alpha}{L}\|y_{1}-x_{1}\|^2\\
&= \|x_0-x_*\|^2-\|u_{1}\|^2-\frac{\tau \alpha}{L}t_1^2\|y_{1}-x_{1}\|^2.
\end{align*}
This inequality together with \cref{eq:67454} imply \cref{nn2354}. To prove \cref{nn2354**}, note that part (i) of \cref{aux-1} implies $t_k\ge \frac{k+1}{2}$, hence the result follows directly from \cref{nn2354}.
\end{proof}

We next derive iteration-complexity bounds for I-FISTA to obtain approximate solutions of problem~\cref{pr1} in the sense of \cref{def:approxsol}.

\begin{theorem}\label{l-rate-456}
Let $d_0$ be the distance from $x_0$ to $S_*$.
Let $(x_k,y_k,t_k)_{k\in \mathbb{N}}$ be the sequence
generated by \textup{I-FISTA}.  Then, for every $k\in \mathbb{N}$,
\[
r_k\in \partial_{\ve_k}g(x_{k})+\nabla f(x_k) \subset \partial_{\ve_k}F(x_{k}),\]
where $r_k\coloneqq v_k+{L}(y_{k}- {x}_k)/\tau+\nabla f( x_k)-\nabla f( y_k)$. Additionally, if  $\tau<1$ and $\alpha\in (0,L(1-\tau)/\tau]$, then
 there exists $\ell_k\leq k$ such that
\begin{equation}\label{nn2354er3452}
\|r_{\ell_k}\|=\mathcal{O}\Big(d_0\sqrt{L^3/k^3}\Big), \quad \ve_{\ell_k}=\mathcal{O}\big(d_0^2{L^2/k^3}\big),
\end{equation}
\end{theorem}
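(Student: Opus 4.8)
The plan is to treat the two assertions separately: the $\varepsilon$-subgradient inclusion holds for every $k$ with no extra hypotheses, while the rates come from feeding the weighted telescoping estimate \eqref{nn2354} into a pigeonhole argument. For the inclusion I would start from the defining relation \eqref{IR-cond-k-inc} and rearrange: $v_k + \frac{L}{\tau}(y_k - x_k) - \nabla f(y_k) \in \partial_{\ve_k}g(x_k)$, so that $r_k - \nabla f(x_k) \in \partial_{\ve_k}g(x_k)$, i.e.\ $r_k \in \partial_{\ve_k}g(x_k) + \nabla f(x_k)$. Since $f$ is convex and differentiable, $\nabla f(x_k) \in \partial f(x_k) = \partial_0 f(x_k)$, and \cref{for:transp}(i) gives $\partial_{\ve_k}g(x_k) + \partial_0 f(x_k) \subset \partial_{\ve_k}(g+f)(x_k) = \partial_{\ve_k}F(x_k)$. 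This settles the first claim.

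Next I would turn the error condition of the IR Rule into pointwise bounds controlled by $\|y_k - x_k\|$. From \eqref{IR-cond-k-ine}, together with $(1-\tau)L - \alpha\tau \le L$ (valid since $\alpha \ge 0$ and $\tau\in(0,1]$), one gets $\|\tau v_k\|^2 \le L^2\|y_k - x_k\|^2$ and $2\tau L\ve_k \le L^2\|y_k - x_k\|^2$, hence $\|v_k\| \le \frac{L}{\tau}\|y_k - x_k\|$ and $\ve_k \le \frac{L}{2\tau}\|y_k - x_k\|^2$. Applying the triangle inequality to the definition of $r_k$ and using the $L$-Lipschitz continuity of $\nabla f$ then yields $\|r_k\| \le \|v_k\| + \frac{L}{\tau}\|y_k - x_k\| + L\|x_k - y_k\| \le L(\tfrac{2}{\tau}+1)\|y_k - x_k\|$, so in particular $\|r_k\| = \mathcal{O}(L\|y_k - x_k\|)$ and $\ve_k = \mathcal{O}(L\|y_k - x_k\|^2)$.

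For the rates, since $F(x_k)\ge F^*$ the estimate \eqref{nn2354} collapses to $\sum_{i=1}^k t_i^2\|y_i - x_i\|^2 \le \frac{L}{\alpha\tau}d_0^2$ (this is where $\alpha>0$ is used). By \cref{aux-1}(i), $t_i \ge (i+1)/2$, so $\sum_{i=1}^k t_i^2 \ge \frac14\sum_{i=1}^k(i+1)^2 \ge \frac{k^3}{12}$. A standard pigeonhole/averaging argument then produces an index $\ell_k \le k$ with $\|y_{\ell_k} - x_{\ell_k}\|^2 \le \big(\sum_{i=1}^k t_i^2\|y_i-x_i\|^2\big)\big/\big(\sum_{i=1}^k t_i^2\big) \le \frac{12L}{\alpha\tau}\,\frac{d_0^2}{k^3}$, i.e.\ $\|y_{\ell_k} - x_{\ell_k}\| = \mathcal{O}(d_0\sqrt{L/k^3})$. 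Substituting this into the bounds of the previous paragraph gives $\|r_{\ell_k}\| = \mathcal{O}(L)\cdot\mathcal{O}(d_0\sqrt{L/k^3}) = \mathcal{O}(d_0\sqrt{L^3/k^3})$ and $\ve_{\ell_k} = \mathcal{O}(L\cdot d_0^2 L/k^3) = \mathcal{O}(d_0^2 L^2/k^3)$, which is exactly \eqref{nn2354er3452}.

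I do not expect a genuinely hard step; the one place that needs care is the averaging, where the acceleration enters. Because $t_i = \Theta(i)$, the weights $t_i^2$ grow quadratically and the ``effective length'' of the average over the first $k$ iterates is $\sum_{i\le k}t_i^2 = \Theta(k^3)$ rather than $\Theta(k)$, and this is precisely what promotes the residual rate from $k^{-1/2}$ to $k^{-3/2}$ (and the $\varepsilon$-rate correspondingly to $k^{-3}$). The remaining work is bookkeeping of the powers of $L$ through \eqref{IR-cond-k-ine} and the Lipschitz bound so as to land on $L^3$ under the square root — easy to do but also easy to get wrong — and one should note that the hypotheses $\tau<1$ and $\alpha>0$ are indispensable, since for $\alpha=0$ the left-hand side of \eqref{nn2354} carries no information about $\|y_i - x_i\|$.
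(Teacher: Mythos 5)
Your proposal is correct and follows essentially the same route as the paper: the inclusion comes directly from \eqref{IR-cond-k-inc} together with \cref{for:transp}, and the rates come from dropping the $F(x_k)-F^*$ term in \eqref{nn2354}, lower-bounding $\sum_{i\le k}t_i^2$ by $\Theta(k^3)$ via \cref{aux-1}(i), picking the minimizing index, and then converting $\|y_{\ell_k}-x_{\ell_k}\|$ into bounds on $\|r_{\ell_k}\|$ and $\ve_{\ell_k}$ through \eqref{IR-cond-k-ine} and the Lipschitz continuity of $\nabla f$. The only cosmetic difference is that you bound $\sqrt{L[(1-\tau)L-\alpha\tau]}/\tau$ by $L/\tau$ while the paper keeps the sharper constant, which does not affect the stated $\mathcal{O}$ rates.
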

\begin{proof}
The inclusion follows from \cref{IR-cond-k-inc}.
Now let $x_*$ be the projection of $x_0$ onto $S_*$.
It follows from  \cref{nn2354} that
\[
\min_{i=1, \ldots,k}\|y_{i}-x_{i}\|^2
\leq
\frac{L}{\alpha\tau\sum_{i=1}^{k}t_{i}^2}d_0^2,
\]
which, when combined with part (i) of \cref{aux-1}, yields
\[
\min_{i=1, \ldots,k}\|y_{i}-x_{i}\|^2
\leq \frac{4L}{\alpha\tau\sum_{i=1}^{k}(i+1)^2}d_0^2.
\]
Since
\[
\sum_{i=1}^{k}(i+1)^2 = \frac{k(k+1)(2k+1)}{6}+k(k+2)  \ge \frac{k^3}{3}, \quad \forall k \ge 1,
\]
we obtain
\[
\min_{i=1, \ldots,k}\|y_{i}-x_{i}\|^2
\le \frac{12L}{\alpha\tau k^3}d_0^2.
\]
Hence, there exists ${\ell_k}\leq k$ such that
\begin{equation}\label{yk-xk}
\|y_{{\ell_k}}-x_{{\ell_k}}\|\leq 2\sqrt{\frac{3L}{\alpha\tau k^3}}\,d_0.
\end{equation}
From the definition of $r_k$, condition \cref{IR-cond-k-ine} for $\|v_k\|$ in the {IR Rule}, and the Lipschitz continuity of $\nabla f$, we have
\begin{align*}
\|r_{\ell_k}\|&\leq \|v_{\ell_k}\|+\frac{L}{\tau}\|y_{{\ell_k}}- {x}_{\ell_k}\|+\|\nabla f( x_{\ell_k})-\nabla f( y_{\ell_k})\|\\
& \leq \left( \frac{\sqrt{L[(1-\tau) L-\alpha\tau]} }{\tau}+\frac{L}{\tau}+L\right)\|y_{{\ell_k}}-x_{{\ell_k}}\|\\&
\leq 2L\left( \frac{\sqrt{1-\tau} + 1 + \tau}{\tau}\right)\sqrt{\frac{3L}{\alpha\tau k^3}}\,d_0,
\end{align*}
which implies the first part of \cref{nn2354er3452}. Moreover, it follows from condition \cref{IR-cond-k-ine} for $\ve_k$ in the IR Rule that
\[
\ve_{\ell_k}\leq \frac{ { (1-\tau)}L-\alpha\tau}{2\tau}\|x_{\ell_k}-y_{\ell_k}\|^2\leq \frac{6L\left[ { (1-\tau)}L-\alpha\tau\right]}{\alpha\tau^2 k^3}d_0^2,
\]
which proves the second part of \cref{nn2354er3452}.
\end{proof}

\section{Inexact extragradient accelerated method}\label{IEAM}

We now formally present our inexact accelerated me\-thod with an extra-step.
\medskip
\begin{center}\fbox{\begin{minipage}[b]{0.981\textwidth}
\begin{algorithm}[IE-FISTA] \label{alg:2}
Let $x_0$, $y_0 \in \mathbb{E}$, $\alpha>1/L$ and $\sigma\in[0,1]$ be given, and set $\lambda\coloneqq \alpha/(1+\alpha L)$, $\tau_0\coloneqq 0$, $\tilde x_0 \coloneqq x_0$ and $k\coloneqq 0$. \\
\noindent Iterative Step. Compute
\begin{align}\label{deft1}
    \tau_{k+1}&\coloneqq \tau_{k}+\frac{ \lambda+\sqrt{\lambda^2+4\lambda \tau_{k}}}{2},\\
    y_{k}&\coloneqq \frac{ \tau_{k}}{ \tau_{k+1}}\tilde x_{k}+\frac{ \tau_{k+1}- \tau_{k}}{\tau_{k+1}}x_{k}, \label{defy2}
\end{align}
and find a triple
 $$(\tilde x_{k+1},v_{k+1},\ve_{k+1})\in \mathcal{J}_e^{\alpha,\sigma}(y_k,1/L)$$ given in {IER Rule},
and set
\begin{equation} \label{defx2}
 x_{k+1}\coloneqq x_{k}-(\tau_{k+1}-\tau_k)(v_{k+1}+L( {y}_k-\tilde x_{k+1})).
\end{equation}
\end{algorithm}
\end{minipage}}\end{center}
\medskip

Note that the triple $(\tilde x_{k+1},v_{k+1},\ve_{k+1})$ in the iterative step of \textup{IE-FISTA} satisfies
\begin{gather}\label{defx}
v_{k+1}\in \partial_{\ve_{k+1}}g(\tilde x_{k+1})+{L}(\tilde x_{k+1}- {y}_k)+\nabla f( y_k), \\\label{deferro}
\|\alpha v_{k+1}+ \tilde x_{k+1}-y_k\|^2+2\alpha\ve_{k+1} \leq \sigma^2\|\tilde x_{k+1}-y_k\|^2.
\end{gather}
If $\sigma=0$, it follows from \cref{deferro} that $\ve_{k+1}=0$ and $ v_{k+1}=(y_k- \tilde x_{k+1})/\alpha$, giving us
\begin{gather*}
\tilde{x}_{k+1}=\argmin_{x \in \mathbb{E}} \left \{g(x)+\frac{1}{2\lambda}\left\|x-\left( {y}_k-\lambda \nabla f( y_k)\right)\right\|^2 \right \}, \\
x_{k+1} = x_{k}-\frac{(\tau_{k+1}-\tau_k)}{\lambda}( {y}_k-\tilde x_{k+1});
\end{gather*}
hence, \textup{IE-FISTA} recovers the exact version proposed in \cite[Algorithm~I]{acc-HPE}.

We begin the complexity analysis of {IE-FISTA} by first defining the sequence $(\mu_k)_{k\in\mathbb{N}}$ as
\begin{equation}\label{def:mu}
\mu_{k}\coloneqq f(\tilde x_{k})-\big[f(y_{k-1})+\inner{\nabla f(y_{k-1})}{\tilde x_{k}-y_{k-1}}\big], \quad \forall \, k\in\mathbb{N}.
\end{equation}
We also consider the affine maps $\Psi_k\colon\mathbb{E}\to \mathbb{R}$ given by
\begin{equation}\label{defgamma}
\Psi_k(x)\coloneqq F(\tilde x_k)+\inner{v_{k}+L( {y}_{k-1}-\tilde x_{k})}{x-\tilde x_k}-\mu_k-\ve_k, \quad \forall \, x\in \mathbb{E}\;\mbox{and} \; k\in\mathbb{N},
\end{equation}
and $\Gamma_k\colon\mathbb{E}\to \mathbb{R}$ defined as
\begin{equation}\label{defGamma}
 \Gamma_0(x)\equiv 0, \quad \Gamma_{k+1}(x):=\frac{\tau_{k}}{\tau_{k+1}} \Gamma_{k}(x)+\frac{\tau_{k+1}- \tau_{k}}{ \tau_{k+1}}\Psi_{k+1}(x), \quad \forall \, x\in \mathbb{E}\;\mbox{and} \; k\geq0.
\end{equation}

\begin{lemma}\label{lem-aux} Let $(x_k,\tilde x_k, y_k)_{k\in\mathbb{N}}$ be the sequence generated by \textup{IE-FISTA}. Then the following hold.
\item [ {\bf (i)}] For all $k\in\mathbb{N}$,
\begin{equation}\label{eq:rt65}
\mu_{k}\leq \frac L2 \| \tilde x_{k}-y_{k-1}\|^2.
\end{equation}
\item [ {\bf (ii)}] For all $k\geq0$, \begin{equation}\label{def:xprob}
x_k= \argmin_{x\in\mathbb{E} }\left\{ \tau_k\Gamma_k(x)+\frac12\|x-x_0\|^2\right\}.
\end{equation}
\end{lemma}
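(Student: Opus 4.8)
\textbf{Proof plan for Lemma~\ref{lem-aux}.}

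The plan is to prove the two items separately, with item (i) being an almost immediate consequence of the Lipschitz descent inequality and item (ii) proceeding by induction on $k$.

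For item (i): I would start from the definition \cref{def:mu} of $\mu_k$, which writes $\mu_k$ as the difference between $f(\tilde x_k)$ and its linearization at $y_{k-1}$ evaluated at $\tilde x_k$. Applying the right-hand inequality in \cref{eqlq} with $h=f$, $y=y_{k-1}$, and $x=\tilde x_k$ gives exactly $f(\tilde x_k)\le f(y_{k-1})+\inner{\nabla f(y_{k-1})}{\tilde x_k-y_{k-1}}+\tfrac{L}{2}\|\tilde x_k-y_{k-1}\|^2$, and rearranging yields \cref{eq:rt65}. This step is routine; no obstacle is expected here.

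For item (ii): I would argue by induction on $k$. The base case $k=0$ holds trivially since $\Gamma_0\equiv 0$, so the minimization problem in \cref{def:xprob} reduces to minimizing $\tfrac12\|x-x_0\|^2$, whose solution is $x_0$, matching the initialization $\tilde x_0=x_0$. For the inductive step, assume $x_k=\argmin_{x}\{\tau_k\Gamma_k(x)+\tfrac12\|x-x_0\|^2\}$. Using the recursion \cref{defGamma}, I would write $\tau_{k+1}\Gamma_{k+1}(x)=\tau_k\Gamma_k(x)+(\tau_{k+1}-\tau_k)\Psi_{k+1}(x)$, so the new objective $\tau_{k+1}\Gamma_{k+1}(x)+\tfrac12\|x-x_0\|^2$ equals $\big[\tau_k\Gamma_k(x)+\tfrac12\|x-x_0\|^2\big]+(\tau_{k+1}-\tau_k)\Psi_{k+1}(x)$. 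Since $\Psi_{k+1}$ is affine (from \cref{defgamma}) with gradient $v_{k+1}+L(y_k-\tilde x_{k+1})$, the first bracket is a strongly convex quadratic whose unique minimizer is $x_k$ by the induction hypothesis, so it equals $\tfrac12\|x-x_k\|^2+\text{const}$. Therefore the full objective is (up to a constant) $\tfrac12\|x-x_k\|^2+(\tau_{k+1}-\tau_k)\inner{v_{k+1}+L(y_k-\tilde x_{k+1})}{x}$, a strongly convex quadratic in $x$ with unique minimizer $x_k-(\tau_{k+1}-\tau_k)(v_{k+1}+L(y_k-\tilde x_{k+1}))$. By the update \cref{defx2} this is precisely $x_{k+1}$, completing the induction.

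The main thing to be careful about is the rewriting ``first bracket $=\tfrac12\|x-x_k\|^2+\text{const}$''. This is the standard fact that a strongly convex quadratic with leading term $\tfrac12\|x\|^2$ and minimizer $x_k$ can be completed to $\tfrac12\|x-x_k\|^2$ plus a constant; it requires knowing that $\tau_k\Gamma_k(x)+\tfrac12\|x-x_0\|^2$ indeed has leading quadratic term exactly $\tfrac12\|x\|^2$, which follows because $\Gamma_k$ is a convex combination of affine maps $\Psi_j$ and hence itself affine, so it contributes nothing to the quadratic part. Once this is observed, the step is purely algebraic. No genuine obstacle is anticipated; the lemma is a bookkeeping result setting up the estimating-sequence machinery used in the subsequent convergence analysis.
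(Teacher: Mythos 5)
Your proposal is correct and takes essentially the same approach as the paper: part (i) is the same direct application of \cref{eqlq}, and part (ii) rests on the same observation that $\Gamma_k$ is affine so the objective is a strongly convex quadratic whose minimizer is characterized by the first-order condition. The only cosmetic difference is that the paper telescopes the update \cref{defx2} directly to get $x_k=x_0-\tau_k\nabla\Gamma_k$ and checks optimality in one step, whereas you repackage the same computation as an induction with completion of the square.
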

\begin{proof}
For part (i), we have that inequality \cref{eq:rt65} follows from \cref{def:mu} and \cref{eqlq}.
To prove part (ii), we first observe that \cref{defgamma} and \cref{defGamma} imply that
\begin{equation}\label{pro:gamma}
\tau_k\nabla\Gamma_k(x)=\sum_{i=1}^k(\tau_{i}- \tau_{i-1})(v_{i}+L( {y}_{i-1}-\tilde x_{i})),
\quad \forall \, x\in \mathbb{E}\;\mbox{and} \; k\in \mathbb{N}.
\end{equation}
Combining \cref{pro:gamma} with \cref{defx2} implies
$$
x_k=x_0-\sum_{i=1}^k(\tau_{i}- \tau_{i-1})(v_{i}+L( {y}_{i-1}-\tilde x_{i}))=x_0-\tau_k\nabla \Gamma_k(x).
$$
Hence,
$0=\tau_k\nabla \Gamma_k(x)+x_k-x_0$, which proves \cref{def:xprob}.
\end{proof}

\begin{lemma}\label{pro:gammaL}
Let $(x_k,\tilde x_k, y_k)_{k\in\mathbb{N}}$ be the sequence generated by \textup{IE-FISTA}. Then the following hold.
\item [ {\bf (i)}] For all $k\in\mathbb{N}$,
\begin{equation}\label{pro:gamma22}
\Psi_k(x)\leq F(x), \quad \forall x\in \mathbb{E}.
\end{equation}
\item [ {\bf (ii)}] For all $k\geq0$,
\begin{equation}\label{pro:gamma23}
\tau_k\Gamma_k(x)\leq \tau_kF(x), \quad \forall x\in \mathbb{E}.
\end{equation}
\end{lemma}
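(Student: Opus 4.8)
The plan is to prove part (i) directly from the definitions and then obtain part (ii) by a straightforward induction on $k$, using the convexity-style recursion that defines $\Gamma_k$.

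For part (i), I would start from the definition \cref{defgamma} of $\Psi_k$ and the definition \cref{def:mu} of $\mu_k$. Substituting the latter into the former and using $F = f+g$, one gets
\[
\Psi_k(x) = g(\tilde x_k) + f(y_{k-1}) + \inner{\nabla f(y_{k-1})}{\tilde x_k - y_{k-1}} + \inner{v_k + L(y_{k-1}-\tilde x_k)}{x-\tilde x_k} - \ve_k.
\]
Now I would peel off the two affine-in-$x$ pieces: first, the inclusion \cref{defx} says $v_k + L(y_{k-1}-\tilde x_k) - \nabla f(y_{k-1}) \in \partial_{\ve_k} g(\tilde x_k)$, so by the definition of the $\ve$-subdifferential, $g(\tilde x_k) + \inner{v_k + L(y_{k-1}-\tilde x_k) - \nabla f(y_{k-1})}{x-\tilde x_k} - \ve_k \le g(x)$. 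Second, by convexity of $f$ (inequality \cref{eqlq}, left side), $f(y_{k-1}) + \inner{\nabla f(y_{k-1})}{x - y_{k-1}} \le f(x)$. Adding $\inner{\nabla f(y_{k-1})}{x-\tilde x_k}$ from the subgradient term to the remaining $\inner{\nabla f(y_{k-1})}{\tilde x_k - y_{k-1}}$ term recombines into exactly $\inner{\nabla f(y_{k-1})}{x-y_{k-1}}$, so the two bounds combine to give $\Psi_k(x) \le g(x) + f(x) = F(x)$, which is \cref{pro:gamma22}. The one point to be careful about is the bookkeeping of the linear terms in $x$; the $\mu_k$ definition is engineered precisely so that this cancellation works, so this is the key structural observation rather than a real obstacle.

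For part (ii), I would argue by induction on $k$. The base case $k=0$ is trivial since $\Gamma_0 \equiv 0$ and $\tau_0 = 0$, so $\tau_0 \Gamma_0(x) = 0 = \tau_0 F(x)$. For the inductive step, suppose $\tau_k \Gamma_k(x) \le \tau_k F(x)$ for all $x$. Multiplying the recursion \cref{defGamma} through by $\tau_{k+1}$ gives
\[
\tau_{k+1}\Gamma_{k+1}(x) = \tau_k \Gamma_k(x) + (\tau_{k+1}-\tau_k)\Psi_{k+1}(x).
\]
By the inductive hypothesis the first term is at most $\tau_k F(x)$, and by part (i) (applied with index $k+1$) together with $\tau_{k+1} - \tau_k > 0$ (which holds by \cref{aux-12}(i)), the second term is at most $(\tau_{k+1}-\tau_k)F(x)$. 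Adding these yields $\tau_{k+1}\Gamma_{k+1}(x) \le \tau_{k+1} F(x)$, completing the induction and establishing \cref{pro:gamma23}.

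I do not anticipate a genuine obstacle here: the whole lemma is essentially a verification that the auxiliary objects $\Psi_k$ and $\Gamma_k$ are lower-affine-minorant/lower-model functions for $F$, which is exactly what the definitions were set up to give. The only place requiring care is the linear-term cancellation in part (i), and the only external facts needed are the definition of $\partial_\ve g$, the convexity inequality \cref{eqlq}, the positivity of $\tau_{k+1}-\tau_k$ from \cref{aux-12}, and inequality \cref{eq:rt65} from \cref{lem-aux}(i) is in fact not even needed for this particular lemma.
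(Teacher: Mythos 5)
Your proposal is correct and follows essentially the same route as the paper: part (i) is the same cancellation — the paper packages it by transporting $\nabla f(y_{k-1})$ into $\partial_{\mu_k}f(\tilde x_k)$ via \cref{for:transp} and summing $\ve$-subdifferentials, while you combine the $\ve_k$-subgradient inequality for $g$ with the gradient inequality for $f$ directly, which is the same computation. For part (ii), your induction is just the unrolled form of the paper's telescoping identity $\tau_k\Gamma_k(x)=\sum_{i=1}^k(\tau_i-\tau_{i-1})\Psi_i(x)$, and your explicit appeal to $\tau_{k+1}-\tau_k>0$ from \cref{aux-12} is a point the paper leaves implicit.
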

\begin{proof}
First note that from part (ii) of \cref{for:transp} and from the definition of $\mu_k$ \cref{def:mu}, we have that $\nabla f( y_{k-1})\in\partial_{\mu_k}f(\tilde x_{k})$. Hence, it follows from \cref{defx} and part (i) of \cref{for:transp} that
\begin{equation}\label{eqr:45609}
v_{k}+L( {y}_{k-1}-\tilde x_{k}) \in \partial_{\ve_{k}} g(\tilde x_{k})+\nabla f( y_{k-1})\subset \partial_{\ve_k+\mu_k}F(\tilde x_{k}),
\end{equation}
which is equivalent to
$$
F(\tilde x_k)+\langle v_{k}+L( {y}_{k-1}-\tilde x_{k}), x-\tilde x_k\rangle -\mu_k-\epsilon_k\le F(x), \quad \forall \, x\in \mathbb{E}.
$$
Thus \cref{pro:gamma22} follows from the definition of $\Psi_k(x)$ given in \cref{defgamma}, which proves part (i).

To prove part (ii), we use \cref{defGamma} and write
\begin{align*}\tau_k\Gamma_k(x)&=\tau_{k-1}\Gamma_{k-1}(x)+(\tau_{k}- \tau_{k-1})\Psi_{k}(x)\\
&=\sum_{i=1}^k(\tau_{i}- \tau_{i-1})\Psi_{i}(x).
\end{align*}
Then, using item (i), we obtain \cref{pro:gamma23}, which concludes the proof.
\end{proof}

We next establish a key result for the complexity analysis of {IE-FISTA}.

\begin{proposition}\label{lemin453} For every $k\geq0$, let
\begin{equation}\label{def:beta}
\beta_k: =\min_{x\in\mathbb{E} }\left\{ \tau_k\Gamma_k(x)+\frac12\|x-x_0\|^2\right\}-\tau_{k}F(\tilde x_{k}).
\end{equation}
Then,
\begin{equation}\label{ine4356}
\beta_{k+1} \geq \beta_k+ \frac{(1-\sigma^2)\tau_{k+1}}{2\alpha}\|\tilde x_{k+1}-y_k\|^2.
\end{equation}
\end{proposition}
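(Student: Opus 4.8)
To prove \eqref{ine4356}, I would work directly from the definition \eqref{def:beta} of $\beta_k$ and exploit the fact, established in part (ii) of \cref{lem-aux}, that $x_k$ is the exact minimizer of the strongly convex function $\phi_k(x) \coloneqq \tau_k\Gamma_k(x) + \frac12\|x-x_0\|^2$. Since $\Gamma_k$ is affine (it is a convex combination of the affine maps $\Psi_i$, see \eqref{defgamma}--\eqref{defGamma}), $\phi_k$ is a quadratic with Hessian $\Id$, so $\phi_k(x) = \phi_k(x_k) + \frac12\|x-x_k\|^2$ for every $x$. In particular $\min_x \phi_{k+1}(x) = \phi_{k+1}(x_{k+1})$, and I can evaluate $\phi_{k+1}$ at any convenient point by adding $\frac12\|\cdot - x_{k+1}\|^2$ afterward.

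The key algebraic step is to pass from the recursion for $\Gamma_{k+1}$ to a lower bound for $\phi_{k+1}(x_k)$: using \eqref{defGamma},
\[
\tau_{k+1}\Gamma_{k+1}(x) = \tau_k\Gamma_k(x) + (\tau_{k+1}-\tau_k)\Psi_{k+1}(x),
\]
so $\phi_{k+1}(x) = \phi_k(x) + (\tau_{k+1}-\tau_k)\Psi_{k+1}(x) = \phi_k(x_k) + \frac12\|x-x_k\|^2 + (\tau_{k+1}-\tau_k)\Psi_{k+1}(x)$. Now $\phi_k(x_k) = \beta_k + \tau_k F(\tilde x_k)$ by \eqref{def:beta}, and $\Psi_{k+1}$ is affine, so minimizing the right-hand side over $x$ is an explicit one-dimensional-in-direction quadratic minimization: the minimizer differs from $x_k$ by $-(\tau_{k+1}-\tau_k)\nabla\Psi_{k+1}$, and the minimum value is
\[
\beta_k + \tau_k F(\tilde x_k) + (\tau_{k+1}-\tau_k)\Psi_{k+1}(x_k) - \tfrac{(\tau_{k+1}-\tau_k)^2}{2}\|\nabla\Psi_{k+1}\|^2,
\]
where $\nabla\Psi_{k+1} = v_{k+1} + L(y_k - \tilde x_{k+1})$. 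Subtracting $\tau_{k+1}F(\tilde x_{k+1})$ gives $\beta_{k+1}$. To turn the term $\tau_k F(\tilde x_k) + (\tau_{k+1}-\tau_k)\Psi_{k+1}(x_k) - \tau_{k+1}F(\tilde x_{k+1})$ into something useful, I would expand $\Psi_{k+1}(x_k)$ via \eqref{defgamma}, write $\tau_k F(\tilde x_k) = \tau_k\Psi_{k+1}(\tilde x_k) + \tau_k(\mu_{k+1}+\ve_{k+1}) - \tau_k\langle v_{k+1}+L(y_k-\tilde x_{k+1}), \tilde x_k - \tilde x_{k+1}\rangle$ (rearranging the definition of $\Psi_{k+1}$ evaluated at $\tilde x_k$), and use $\Psi_{k+1}(\tilde x_k) \le F(\tilde x_k)$... actually more cleanly: use that $\tau_k\Psi_{k+1}(x_k) + (\tau_{k+1}-\tau_k)\Psi_{k+1}(x_k)$... the cleanest route is to recognize that $y_k = \frac{\tau_k}{\tau_{k+1}}\tilde x_k + \frac{\tau_{k+1}-\tau_k}{\tau_{k+1}}x_k$ from \eqref{defy2}, so $\tau_{k+1} y_k = \tau_k \tilde x_k + (\tau_{k+1}-\tau_k)x_k$, and hence the affine combination $\tau_k \Psi_{k+1}(\tilde x_k) + (\tau_{k+1}-\tau_k)\Psi_{k+1}(x_k) = \tau_{k+1}\Psi_{k+1}(y_k)$ because $\Psi_{k+1}$ is affine. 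This is the crucial identity: it collapses the two evaluation points into $y_k$.

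Putting this together: $\beta_{k+1} \ge \beta_k + \tau_{k+1}\big[\Psi_{k+1}(y_k) - F(\tilde x_{k+1})\big] + \tau_k\big[F(\tilde x_k) - \Psi_{k+1}(\tilde x_k)\big] - \frac{(\tau_{k+1}-\tau_k)^2}{2}\|v_{k+1}+L(y_k-\tilde x_{k+1})\|^2$. The middle bracket is $\ge 0$ by \eqref{pro:gamma22}, so it can be dropped. For the first bracket, by the definition \eqref{defgamma} of $\Psi_{k+1}$, $\Psi_{k+1}(y_k) - F(\tilde x_{k+1}) = \langle v_{k+1}+L(y_k-\tilde x_{k+1}), y_k - \tilde x_{k+1}\rangle - \mu_{k+1} - \ve_{k+1}$. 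I would then use \cref{aux-12}(i), namely $(\tau_{k+1}-\tau_k)^2 = \lambda \tau_{k+1}$ with $\lambda = \alpha/(1+\alpha L)$, to replace the coefficient of the quadratic penalty term, and use \eqref{eq:rt65} to bound $\mu_{k+1} \le \frac L2\|\tilde x_{k+1}-y_k\|^2$. What remains is to show
\[
\langle v_{k+1}+L(y_k-\tilde x_{k+1}), y_k-\tilde x_{k+1}\rangle - \tfrac L2\|\tilde x_{k+1}-y_k\|^2 - \ve_{k+1} - \tfrac{\lambda}{2}\|v_{k+1}+L(y_k-\tilde x_{k+1})\|^2 \ge \tfrac{(1-\sigma^2)}{2\alpha}\|\tilde x_{k+1}-y_k\|^2,
\]
and this is where the IER Rule inequality \eqref{deferro} enters: I would expand everything, substitute $\lambda = \alpha/(1+\alpha L)$, and check that the left side equals $\frac{1}{2\alpha}\big(\|\tilde x_{k+1}-y_k\|^2 - \|\alpha v_{k+1}+\tilde x_{k+1}-y_k\|^2 - 2\alpha\ve_{k+1}\big)$ after the algebra collapses — a plausible identity given the shape of \eqref{deferro} — so \eqref{deferro} gives exactly the $\frac{1-\sigma^2}{2\alpha}\|\tilde x_{k+1}-y_k\|^2$ lower bound. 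The main obstacle is this last line of bookkeeping: verifying that the coefficients of $\|y_k-\tilde x_{k+1}\|^2$, $\|v_{k+1}\|^2$, and $\langle v_{k+1}, y_k-\tilde x_{k+1}\rangle$ all match up so that the expression on the left reorganizes into $\frac{1}{2\alpha}(1 - \lambda L)\cdots$; getting the $\lambda = \alpha/(1+\alpha L)$ substitution to produce precisely the completed square in \eqref{deferro} is delicate but, given how the IER Rule was designed, should work out cleanly.
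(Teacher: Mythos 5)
Your plan is sound and does prove \cref{ine4356}, but it is organized somewhat differently from the paper's proof. The paper never minimizes explicitly: it fixes an arbitrary $u\in\mathbb{E}$, writes $\tau_{k+1}\Gamma_{k+1}(u)+\frac12\|u-x_0\|^2=\tau_k\Gamma_k(x_k)+\frac12\|x_k-x_0\|^2+\frac12\|u-x_k\|^2+(\tau_{k+1}-\tau_k)\Psi_{k+1}(u)$, inserts $\tau_k\Psi_{k+1}(\tilde x_k)\le\tau_kF(\tilde x_k)$, and uses affinity of $\Psi_{k+1}$ to collapse everything onto the auxiliary point $\tilde c(u)=\frac{\tau_k}{\tau_{k+1}}\tilde x_k+\frac{\tau_{k+1}-\tau_k}{\tau_{k+1}}u$; the bounds via $\mu_{k+1}\le\frac L2\|\tilde x_{k+1}-y_k\|^2$ and the IER Rule are then applied at $\tilde c(u)$, the troublesome coefficient $\frac1\lambda-L-\frac1\alpha$ vanishes identically, and only at the very end is the infimum over $u$ taken. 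You instead minimize the quadratic first (observing, nicely, that the minimizer is exactly $x_{k+1}$ by \cref{defx2}), pick up the explicit penalty $\frac{(\tau_{k+1}-\tau_k)^2}{2}\|v_{k+1}+L(y_k-\tilde x_{k+1})\|^2=\frac{\lambda\tau_{k+1}}{2}\|\nabla\Psi_{k+1}\|^2$ via \cref{aux-12}(i), and use the identity $\tau_{k+1}y_k=\tau_k\tilde x_k+(\tau_{k+1}-\tau_k)x_k$ so that everything is evaluated at the single point $y_k$. In effect your argument is the dual organization (minimize, then bound at $y_k$) of the paper's (bound pointwise at $\tilde c(u)$, then take the infimum); both rest on the same ingredients — \cref{lem-aux}, \cref{pro:gammaL}(i), \cref{aux-12}(i), \eqref{eq:rt65}, \eqref{deferro}, and $\lambda=\alpha/(1+\alpha L)$ — and yours gives a slightly more explicit picture of where $x_{k+1}$ comes from.

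One correction to the last step you flagged as delicate: the left-hand side of your remaining inequality does \emph{not} collapse exactly to $\frac{1}{2\alpha}\bigl(\|\tilde x_{k+1}-y_k\|^2-\|\alpha v_{k+1}+\tilde x_{k+1}-y_k\|^2-2\alpha\ve_{k+1}\bigr)$. Writing $w\coloneqq y_k-\tilde x_{k+1}$ and using $1-\lambda L=\frac{1}{1+\alpha L}$, $\lambda=\frac{\alpha}{1+\alpha L}$, a direct expansion gives
\begin{align*}
\inner{v_{k+1}+Lw}{w}-\tfrac L2\|w\|^2-\ve_{k+1}-\tfrac{\lambda}{2}\|v_{k+1}+Lw\|^2
&=\inner{v_{k+1}}{w}-\tfrac{\alpha}{2}\|v_{k+1}\|^2-\ve_{k+1}\\
&\quad+\tfrac{L}{2(1+\alpha L)}\|\alpha v_{k+1}-w\|^2,
\end{align*}
so your conjectured expression is recovered only up to the extra nonnegative term $\frac{L}{2(1+\alpha L)}\|\alpha v_{k+1}+\tilde x_{k+1}-y_k\|^2$. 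This is harmless: dropping that term and noting
\[
\inner{v_{k+1}}{w}-\tfrac{\alpha}{2}\|v_{k+1}\|^2-\ve_{k+1}
=\tfrac{1}{2\alpha}\bigl(\|w\|^2-\|\alpha v_{k+1}-w\|^2-2\alpha\ve_{k+1}\bigr)
\ge\tfrac{1-\sigma^2}{2\alpha}\|w\|^2
\]
by \eqref{deferro}, your final inequality holds and the proof goes through as you outlined.
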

\begin{proof} Let $ u \in \mathbb{E}$. Using the definition of $\Gamma_k$ in \cref{defGamma}, we obtain
\begin{align}\nonumber
\tau_{k+1}\Gamma_{k+1}(u)&+\frac{1}{2}\|u-x_0\|^2= \tau_k\Gamma_{k}(u)+\frac{1}{2}\|u-x_0\|^2+({\tau_{k+1}- \tau_{k}})\Psi_{k+1}(u) \\
&= \tau_k\Gamma_{k}(x_k)+\frac{1}{2}\|x_k-x_0\|^2+\frac{1}{2}\|u-x_k\|^2+({\tau_{k+1}- \tau_{k}})\Psi_{k+1}(u),\label{eqer:er}
\end{align}
where the last equality is due to the fact that $x_k$ is the minimum point of the quadratic function $ \tau_k\Gamma_k(x)+\|x-x_0\|^2/2$ (see part (ii) of \cref{lem-aux}). Next, using part (i) of \cref{pro:gammaL} and the fact that $\Psi_k$ is an affine function, we have
\begin{align*}
({\tau_{k+1}- \tau_{k}})\Psi_{k+1}(u) &\geq({\tau_{k+1}- \tau_{k}})\Psi_{k+1}(u)+\tau_{k}\Psi_{k+1}(\tilde x_{k})- \tau_kF(\tilde x_{k})\\
&= \tau_{k+1}\Psi_{k+1}\left(\frac{\tau_{k}}{\tau_{k+1}}\tilde x_{k}+\frac{\tau_{k+1}- \tau_{k}}{\tau_{k+1}}u\right)-\tau_kF(\tilde x_{k}).
\end{align*}
Now we define
\[
 \tilde c(u)\coloneqq \frac{\tau_{k}}{\tau_{k+1}}\tilde x_{k}+\frac{\tau_{k+1}- \tau_{k}}{\tau_{k+1}}u
\]
and use the definition of $y_k$ in \cref{defy2} to obtain
\begin{align*}
&({\tau_{k+1}- \tau_{k}})\Psi_{k+1}(u)+\frac{1}{2}\|u-x_k\|^2 \\
&\qquad\qquad\geq \tau_{k+1}\bigg(\Psi_{k+1}(\tilde c(u))+\frac{ \tau_{k+1}}{2(\tau_{k+1}- \tau_{k})^2}\|\tilde c(u)- y_{k}\|^2\bigg)
-\tau_kF(\tilde x_{k}).
\end{align*}
Hence, it follows from \cref{eqer:er} and item (i) from \cref{aux-12} that
\begin{align*}
\tau_{k+1}\Gamma_{k+1}(u)+\frac{1}{2}\|u-x_0\|^2
&\geq \tau_k\Gamma_{k}(x_k)+\frac{1}{2}\|x_k-x_0\|^2-\tau_kF(\tilde x_{k})\\
&\quad + \tau_{k+1}
\bigg(
\Psi_{k+1}\,(\tilde c(u))+\frac{1}{2\lambda}\|\tilde c(u)- y_{k}\|^2
\bigg).
\end{align*}
Now, using \cref{def:xprob} and the definitions of $\beta_k$ and $\Psi_k$ in \cref{def:beta} and \cref{defgamma}, respectively, we have
\begin{align*}
&\tau_{k+1}\Gamma_{k+1}(u) + \frac{1}{2}\|u-x_0\|^2 - \tau_{k+1}F(\tilde x_{k+1}) \geq \\
&\quad
\beta_k + \tau_{k+1}
\bigg(
\inner{v_{k+1}+L( {y}_k-\tilde x_{k+1})}{\tilde c(u)-\tilde x_{k+1}}-\mu_{k+1}-\ve_{k+1}
+\frac{1}{2\lambda}\|\tilde c(u)- y_{k}\|^2
\bigg).
\end{align*}
From part (i) of \cref{lem-aux}, we find that
\begin{align*}
L \inner{ {y}_k-\tilde x_{k+1}}{\tilde c(u)-\tilde x_{k+1}}-\mu_{k+1}
&\geq
-L \inner{ {y}_k-\tilde x_{k+1}}{\tilde x_{k+1} - \tilde c(u)} - \frac L2 \| \tilde x_{k+1}-y_k\|^2\\
&= -\frac{L}{2}\|y_k - \tilde c(u)\|^2 +
    \frac{L}{2}\|\tilde x_{k+1} - \tilde c(u)\|\\
&\geq -\frac L{2} \|y_{k}-\tilde c(u)\|^2.
\end{align*}
Combining the last two inequalities, we obtain

\begin{align}\nonumber
&\tau_{k+1}\Gamma_{k+1}(u) + \frac{1}{2}\|u-x_0\|^2 - \tau_{k+1}F(\tilde x_{k+1}) \\
&\qquad
\geq
\beta_k + \tau_{k+1}
\bigg(
\inner{v_{k+1}}{\tilde c(u)-\tilde x_{k+1}}-\ve_{k+1}
+\frac12
\bigg(
\frac{1}{\lambda}-L
\bigg)
\|\tilde c(u)- y_{k}\|^2
\bigg).\label{dq:345}
\end{align}
Now, it follows  from  \cref{deferro}  that
\begin{align*}
&\frac{1-\sigma^2}{2\alpha}\| \tilde x_{k+1}-y_k\|^2\leq-\ve_{k+1}+ \inner{ v_{k+1}}{ y_k-\tilde x_{k+1}}-\frac\alpha2\|v_{k+1}\|^2\\
&=-\ve_{k+1}+ \inner{ v_{k+1}}{  \tilde c(u)-\tilde x_{k+1}}-\frac1{2\alpha}\|\alpha v_{k+1}-( y_k- \tilde c(u))\|^2+\frac1{2\alpha}\| y_k- \tilde c(u)\|^2,
\end{align*}
which implies that
\[
\inner{ v_{k+1}}{ \tilde c(u)-\tilde x_{k+1}}-\ve_{k+1} \geq \frac{1-\sigma^2}{2\alpha}\| \tilde x_{k+1}-y_k\|^2-\frac{1}{2\alpha}\| \tilde c(u)- y_k\|^2.
\]
Combining  \eqref{dq:345} and  the last  inequality, we have
\begin{align*}
&\tau_{k+1}\Gamma_{k+1}(u) + \frac{1}{2}\|u-x_0\|^2 - \tau_{k+1}F(\tilde x_{k+1}) \\
&\qquad
\geq
\beta_k+
\frac{\tau_{k+1}}2
\bigg(
\frac{1-\sigma^2}{\alpha}\| \tilde x_{k+1}-y_k\|^2+ \bigg(
\frac{1}{\lambda} - L - \frac1\alpha
\bigg)
\|\tilde c(u)- y_{k}\|^2
\bigg).
\end{align*}
Now, using the fact that $\lambda= \alpha/(1+\alpha L)$, we obtain
\[
\tau_{k+1}\Gamma_{k+1}(u) + \frac{1}{2}\|u-x_0\|^2 - \tau_{k+1}F(\tilde x_{k+1})
\geq
\beta_k+
\frac{\tau_{k+1}}2
\bigg(
\frac{1-\sigma^2}{\alpha}\| \tilde x_{k+1}-y_k\|^2
\bigg).
\]
Since $u \in \mathbb{E}$ was chosen arbitrarily, this inequality holds for all $u$. Thus, using \cref{def:beta}, we conclude that
\[
\beta_{k+1} \geq \beta_k+ \frac{(1-\sigma^2)\tau_{k+1}}{2\alpha}\|\tilde x_{k+1}-y_k\|^2,
\]
which is the desired inequality.
\end{proof}

The next result establishes the optimal convergence rate of $F(\tilde x_k)-F^*$.

\begin{theorem}\label{l-rate-232}
Let $d_0$ be the distance from $x_0$ to $S_*$.
Let $(x_k,\tau_k,y_k)_{k\in \mathbb{N}}$ be the sequence
generated by \textup{IE-FISTA}. Then,
\begin{equation}\label{nn23542}
\frac{1}{2}\|x_{k}-x_*\|^2+\tau_k(F(\tilde x_{k})-F^*)+ \frac{1-\sigma^2}{2\alpha}\sum_{i=1}^{k}\tau_{i}\|\tilde x_{i}-y_{i-1}\|^2\leq \frac{1}{2}d_0^2.
\end{equation}
In particular,
\[
F(\tilde x_{k})-F^*\leq \frac{2(1+\alpha L)}{\alpha k^2}d_0^2.
\]
\end{theorem}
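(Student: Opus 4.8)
The plan is to combine the one-step progress estimate of \cref{lemin453} with the uniform majorization $\tau_k\Gamma_k \le \tau_k F$ from part (ii) of \cref{pro:gammaL}, exploiting the $1$-strong convexity of the quadratic model $q_k(x):=\tau_k\Gamma_k(x)+\tfrac12\|x-x_0\|^2$.

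First I would telescope inequality \cref{ine4356}: summing it from $i=0$ to $i=k-1$ gives $\beta_k \ge \beta_0 + \frac{1-\sigma^2}{2\alpha}\sum_{i=1}^{k}\tau_i\|\tilde x_i - y_{i-1}\|^2$. Since $\Gamma_0\equiv 0$, $\tau_0=0$, and $\tilde x_0 = x_0$, definition \cref{def:beta} gives $\beta_0 = \min_{x\in\mathbb{E}}\tfrac12\|x-x_0\|^2 = 0$, so $\beta_k \ge \frac{1-\sigma^2}{2\alpha}\sum_{i=1}^{k}\tau_i\|\tilde x_i - y_{i-1}\|^2$.

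Next I would bound $\beta_k$ from above. Fix $x_*$ to be the projection of $x_0$ onto $S_*$, so that $d_0=\|x_0-x_*\|$. By part (ii) of \cref{lem-aux}, $x_k$ is the minimizer of the $1$-strongly convex function $q_k$, hence $q_k(x_*)\ge q_k(x_k)+\tfrac12\|x_*-x_k\|^2$; since $\beta_k = q_k(x_k) - \tau_k F(\tilde x_k)$, this rearranges to $\tfrac12\|x_k-x_*\|^2 + \tau_k F(\tilde x_k) + \beta_k \le \tau_k\Gamma_k(x_*) + \tfrac12 d_0^2$. Applying part (ii) of \cref{pro:gammaL} at $x=x_*$, namely $\tau_k\Gamma_k(x_*)\le \tau_k F(x_*)=\tau_k F^*$, gives $\tfrac12\|x_k-x_*\|^2 + \tau_k(F(\tilde x_k)-F^*) + \beta_k \le \tfrac12 d_0^2$. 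Substituting the lower bound for $\beta_k$ from the previous step yields exactly \cref{nn23542}. For the stated rate, I would discard the two nonnegative terms on the left (note $1-\sigma^2\ge 0$ since $\sigma\in[0,1]$), obtaining $F(\tilde x_k)-F^*\le d_0^2/(2\tau_k)$, and then use part (ii) of \cref{aux-12} with $\lambda=\alpha/(1+\alpha L)$, i.e. $\tau_k\ge \lambda k^2/4$, to conclude $F(\tilde x_k)-F^* \le 2(1+\alpha L)d_0^2/(\alpha k^2)$.

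The substantive work is already contained in \cref{lemin453}, so no serious obstacle remains; the only point requiring care is to use the $1$-strong convexity of $q_k$ (rather than just optimality of $x_k$) so that the squared-distance term $\tfrac12\|x_k-x_*\|^2$ appears on the left-hand side of \cref{nn23542}. One should also double-check the index bookkeeping in the telescoping sum and the identification $\beta_0=0$.
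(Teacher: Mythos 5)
Your proposal is correct and follows essentially the same route as the paper: telescope the inequality of \cref{lemin453} with $\beta_0=0$, use the minimality of $x_k$ for the $1$-strongly convex quadratic $\tau_k\Gamma_k(\cdot)+\tfrac12\|\cdot-x_0\|^2$ evaluated at $x_*$, bound $\tau_k\Gamma_k(x_*)\le\tau_k F^*$ via part (ii) of \cref{pro:gammaL}, and finish with $\tau_k\ge\lambda k^2/4$ from \cref{aux-12}. The only cosmetic difference is that the paper uses the exact quadratic identity (equality) where you invoke the strong-convexity inequality, which suffices.
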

\begin{proof}
Let $x_*$ be the projection of $x_0$ onto $S_*$.
Using \cref{ine4356} recursively and the fact that $\beta_0=0$, we have
\begin{equation}\label{eq:67454*}
\beta_k\geq \frac{1-\sigma^2}{2\alpha}\sum_{i=1}^{k}\tau_{i}\|\tilde x_{i}-y_{i-1}\|^2.
\end{equation}
From \cref{def:xprob} we have that $x_k$ is the minimum point of the quadratic function $ \tau_k\Gamma_k(x)+\|x-x_0\|^2/2$ and
\[
 \tau_k\Gamma_{k}(x_*)+\frac{1}{2}\|x_*-x_0\|^2 =\min_{x\in\mathbb{E} }\left\{ \tau_k\Gamma_k(x)+\frac12\|x-x_0\|^2\right\}+\frac{1}{2}\|x_*-x_k\|^2.
\]
Combining this with \cref{eq:67454*} and \cref{def:beta} yields
\[
\frac{1}{2}\|x_{k}-x_*\|^2+\tau_k(F(\tilde x_{k})-\Gamma_{k}(x_*))+ \frac{1-\sigma^2}{2\alpha}\sum_{i=1}^{k}\tau_{i}\|\tilde x_{i}-y_{i-1}\|^2\leq \frac{1}{2}d_0^2.
\]
Hence, inequality~\cref{nn23542} follows from part (ii) of \cref{pro:gammaL}.

The second part of the theorem follows from the first part, and by part (ii) of \cref{aux-12} and the fact that $\lambda\coloneqq \alpha/(1+\alpha L)$.
\end{proof}

We now present iteration-complexity bounds for {IE-FISTA} to obtain approximate solutions of \cref{pr1} in the sense of \cref{def:approxsol}.

\begin{theorem}
Let $(x_k,\tau_k,y_k)_{k\in \NN}$ be the sequence
generated by \textup{IE-FISTA}.  Then,
\begin{equation} \label{eq:45}
r_k\in \partial_{\ve_{k}} g(\tilde x_{k})+\nabla f( y_{k-1})\subset \partial_{\ve_k+\mu_k}F(\tilde x_{k}), \quad k\in \mathbb{N},
\end{equation}
where $r_k\coloneqq v_{k}+L( {y}_{k-1}-\tilde x_{k})$. Additionally, if $\sigma<1$, then \textup{IE-FISTA} generates a $\rho$-approximate solution $\tilde x_\ell$ of problem~\cref{pr1} with residues $(r_\ell,\ve_\ell+\mu_\ell)$ in the sense of \cref{def:approxsol}
in at most $k=\mathcal{O}\left((d_0/\rho)^{2/3}\right)$ iterations, where $\rho\in(0,1)$ is a given tolerance and
$d_0$ is the distance from $x_0$ to $S_*$.
\end{theorem}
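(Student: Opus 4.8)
The plan is to mirror the structure of the complexity proof for I-FISTA (Theorem~\ref{l-rate-456}), replacing the role of $\|y_i - x_i\|$ with the residual quantity $\|\tilde x_i - y_{i-1}\|$ that appears in the summable term of Theorem~\ref{l-rate-232}. The inclusion \cref{eq:45} requires no new work: it is exactly \cref{eqr:45609}, established inside the proof of Lemma~\ref{pro:gammaL}, namely $r_k = v_k + L(y_{k-1} - \tilde x_k) \in \partial_{\ve_k} g(\tilde x_k) + \nabla f(y_{k-1}) \subset \partial_{\ve_k + \mu_k} F(\tilde x_k)$, using part~(i) of \cref{for:transp} and $\nabla f(y_{k-1}) \in \partial_{\mu_k} f(\tilde x_k)$.

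For the complexity bound, the first step is a pigeonhole argument. From \cref{nn23542} in Theorem~\ref{l-rate-232} we have $\frac{1-\sigma^2}{2\alpha}\sum_{i=1}^k \tau_i \|\tilde x_i - y_{i-1}\|^2 \le \frac12 d_0^2$. Since $\sigma < 1$, this gives $\sum_{i=1}^k \tau_i \|\tilde x_i - y_{i-1}\|^2 \le \frac{\alpha}{1-\sigma^2} d_0^2$. Using $\tau_i \ge \frac{\lambda}{4} i^2$ from part~(ii) of \cref{aux-12} (with $\lambda = \alpha/(1+\alpha L)$) together with $\sum_{i=1}^k i^2 \ge k^3/3$, I obtain $\min_{i=1,\dots,k} \|\tilde x_i - y_{i-1}\|^2 \le \frac{12\alpha}{(1-\sigma^2)\lambda k^3} d_0^2$, so there exists $\ell = \ell_k \le k$ with
\[
\|\tilde x_\ell - y_{\ell-1}\| \le \sqrt{\frac{12\alpha}{(1-\sigma^2)\lambda k^3}}\, d_0 = \mathcal{O}\!\left(d_0 k^{-3/2}\right).
\]

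The second step converts this into bounds on both residues. For $r_\ell$: the error inequality \cref{deferro} of the IER Rule gives $\|\alpha v_\ell + \tilde x_\ell - y_{\ell-1}\| \le \sigma \|\tilde x_\ell - y_{\ell-1}\|$, hence by the triangle inequality $\|v_\ell\| \le \frac{1+\sigma}{\alpha}\|\tilde x_\ell - y_{\ell-1}\|$, and therefore $\|r_\ell\| \le \|v_\ell\| + L\|y_{\ell-1} - \tilde x_\ell\| \le \big(\tfrac{1+\sigma}{\alpha} + L\big)\|\tilde x_\ell - y_{\ell-1}\| = \mathcal{O}(d_0 k^{-3/2})$. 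For $\ve_\ell + \mu_\ell$: \cref{deferro} also yields $\ve_\ell \le \frac{\sigma^2}{2\alpha}\|\tilde x_\ell - y_{\ell-1}\|^2$, and part~(i) of \cref{lem-aux} gives $\mu_\ell \le \frac{L}{2}\|\tilde x_\ell - y_{\ell-1}\|^2$, so $\ve_\ell + \mu_\ell \le \big(\tfrac{\sigma^2}{2\alpha} + \tfrac{L}{2}\big)\|\tilde x_\ell - y_{\ell-1}\|^2 = \mathcal{O}(d_0^2 k^{-3})$.

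The final step combines the estimates. To guarantee $\max\{\|r_\ell\|, \ve_\ell + \mu_\ell\} \le \rho$, it suffices that $C_1 d_0 k^{-3/2} \le \rho$ and $C_2 d_0^2 k^{-3} \le \rho$ for the explicit constants $C_1, C_2$ above; equivalently $k \ge (C_1 d_0/\rho)^{2/3}$ and $k \ge (C_2 d_0^2/\rho)^{1/3}$. Because $\rho \in (0,1)$ we have $\rho^{1/3} \ge \rho^{2/3}$, so $(d_0^2/\rho)^{1/3} = d_0^{2/3}\rho^{-1/3} \le d_0^{2/3}\rho^{-2/3} = (d_0/\rho)^{2/3}$; thus the $\|r_\ell\|$-condition is the binding one, and $k = \mathcal{O}((d_0/\rho)^{2/3})$ iterations suffice. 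I do not anticipate a genuine obstacle here — the argument is the same pigeonhole-plus-telescoping pattern as in Theorem~\ref{l-rate-456}; the only point needing care is tracking why the assumption $\rho < 1$ makes the residual bound on $\|r_\ell\|$ (rather than on $\ve_\ell + \mu_\ell$) determine the iteration count, and why $\sigma < 1$ is needed to extract any bound on $\min_i \|\tilde x_i - y_{i-1}\|$ at all.
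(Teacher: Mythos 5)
Your proposal is correct and follows essentially the same route as the paper: the inclusion via \cref{eqr:45609}, the pigeonhole bound on $\min_i\|\tilde x_i-y_{i-1}\|^2$ from \cref{nn23542} combined with $\tau_i\ge\frac{\lambda}{4}i^2$ and $\sum_{i=1}^k i^2\ge k^3/3$, and the residue estimates from \cref{deferro} and part (i) of \cref{lem-aux}. Your final step merely makes explicit (using $\rho<1$) why the $\|r_\ell\|$ bound determines the $\mathcal{O}((d_0/\rho)^{2/3})$ iteration count, which the paper leaves implicit.
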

\begin{proof}
The first statement of the theorem follows from \cref{eqr:45609} and the definition of $r_k$.
It follows from  \cref{nn23542} that
\[
\min_{i=1, \ldots,k}\|\tilde x_{i}-y_{i-1}\|^2\leq \frac{\alpha}{(1-\sigma^2)\sum_{i=1}^{k}\tau_{i}}d_0^2,\]
which, when combined with part (ii) of \cref{aux-12}, yields
\[
\min_{i=1, \ldots,k}\|\tilde x_{i}-y_{i-1}\|^2\leq \frac{4\alpha}{\lambda(1-\sigma^2)\sum_{i=1}^{k}i^2}d_0^2.
\]
Since
\[
\sum_{i=1}^{k}i^2 = \frac{k(k+1)(2k+1)}{6} \ge \frac{k^3}{3}, \quad \forall k \ge 1,
\]
we obtain
\[
\min_{i=1, \ldots,k}\|\tilde x_{i}-y_{i-1}\|^2\leq \frac{12\alpha}{\lambda(1-\sigma^2)k^3}d_0^2.
\]
Hence, there exists $1 \leq \ell \leq k$ such that
\begin{equation}\label{eq:g2523}
\|\tilde x_{\ell}-y_{\ell-1}\|^2\leq
\frac{12\alpha}{\lambda(1-\sigma^2)k^3}d_0^2.
\end{equation}
Since the error condition in \cref{deferro} implies that
\[
\|\alpha v_{\ell}\|- \|\tilde x_{\ell}-y_{\ell-1}\| \leq\|\alpha v_{\ell}+ \tilde x_{\ell}-y_{\ell-1}\| \leq \sigma\|\tilde x_{\ell}-y_{\ell-1}\|,
\]
we obtain, from the definition of $r_k$, that
\[
\|r_\ell\|\leq \|v_\ell\|+{L}\|y_{\ell-1}- \tilde x_\ell\| \leq \left( \frac{1+\sigma}{\alpha}+L\right)\|\tilde x_{\ell}-y_{\ell-1}\|.
\]
It then follows from \cref{eq:g2523} that
\[
\|r_\ell\|
\leq \left( \frac{1+\sigma}{\alpha}+L\right)\sqrt{ \frac{12\alpha}{\lambda(1-\sigma^2)}}\frac{d_0}{k^{3/2}}.
\]
In addition, from \cref{deferro}, \cref{eq:g2523}, and $\lambda = \alpha/(1 + \alpha L)$, we have that
\[
\ve_\ell
\leq \frac{\sigma^2}{2\alpha}\|\tilde x_\ell-y_{\ell-1}\|^2
\leq \frac{6\sigma^2}{\lambda(1-\sigma^2)k^3}d_0^2
= \frac{6(1+\alpha L)\sigma^2}{\alpha(1-\sigma^2)k^3}d_0^2.
\]
Moreover, \cref{eq:rt65}, \cref{eq:g2523}, and  $\lambda = \alpha/(1 + \alpha L)$ gives us that
\[
\mu_{\ell}
\leq \frac L2 \| \tilde x_{\ell}-y_{\ell-1}\|^2
\leq \frac{6\alpha L}{\lambda(1-\sigma^2)k^3}d_0^2
= \frac{6L(1+\alpha L)}{(1-\sigma^2)k^3}d_0^2.
\]
Combining the last two inequalities, we have
\[
\ve_\ell + \mu_\ell
\leq
\frac{6(1 + \alpha L)(\sigma^2 + \alpha L)}{\alpha(1 - \sigma^2)k^3} d_0^2.
\]
Choosing $k$ so that
\[
\max\Bigg\{
\left( \frac{1+\sigma}{\alpha}+L\right)\sqrt{ \frac{12\alpha}{\lambda(1-\sigma^2)}}\frac{d_0}{k^{3/2}},
\frac{6(1 + \alpha L)(\sigma^2 + \alpha L)}{\alpha(1 - \sigma^2)k^3} d_0^2
\Bigg\}
\leq \rho,
\]
gives us
\[
r_\ell \in \partial_{\ve_\ell + \mu_\ell} F(\tilde x_\ell),
\quad
\max\{\|r_\ell\|, \ve_\ell + \mu_\ell\} \leq \rho,
\]
which implies that $\tilde x_\ell$ is a $\rho$-approximate solution of problem~\cref{pr1} with residues $(r_\ell, \ve_\ell + \mu_\ell)$.
\end{proof}

\section{Numerical experiments}\label{NumSec}

In this section we explore the numerical behavior of \cref{alg:1} (I-FISTA) and \cref{alg:2} (IE-FISTA) and compare them to the inexact method  with $H_k=L\Id$ described in \cite{Defeng-S-2012} that uses the \emph{inexact absolute rule \textup{(IA Rule)}},
\[
v_k \in \partial_{\ve_k}g(x_k) + L(x
_k - y_k) + \nabla f(y_k),
\qquad
\frac{1}{\sqrt{L}}\|v_k\| \le \frac{\delta_k}{\sqrt{2} t_k},
\qquad
\ve_k = \frac{\xi_k}{2 t_k^2},
\]
where $(\delta_k)_{k\in \mathbb{N}}$ and $(\xi_k)_{k\in \mathbb{N}}$ are summable sequences of nonnegative numbers. In our numerical tests, we use $\delta_k = t_k^{-2}$; by part (i) of \cref{aux-1}, this choice for $(\delta_k)_{k\in \mathbb{N}}$ is summable.
We explain in detail below how $\ve_k$ is computed. Based on this choice for $\ve_k$, we can expect $\ve_k$ to be quite small; in numerical tests we observed $\ve_k$ to be approximately machine epsilon. For this reason, we do not explicitly enforce the above condition on $\ve_k$ in our implementation.
We will refer to the algorithm using the IA Rule as IA-FISTA.

We follow \cite{Defeng-S-2012} by considering the $H$-weighted nearest correlation matrix problem for our numerical tests. All algorithms were implemented in the Julia language \cite{Bezanson2017a} and all tests were run on a machine with a 2.9 GHz Dual-Core Intel Core i5 processor and 16 GB 1867 MHz DDR3 memory.

It is important to note that the goal of this section is not to demonstrate that the code we developed is state-of-the-art for solving the $H$-weighted nearest correlation matrix problem. Rather our goal is to investigate how three different theoretical algorithms perform in practice, giving us insight beyond the convergence results presented in this paper. This is especially interesting since these three algorithms all have the same optimal rate of convergence. Here we see if they can be distinguished by their numerical performance on a set of test instances of the $H$-weighted nearest correlation matrix problem.

\subsection{The nearest correlation matrix problem}

Let $\mathcal{S}^n$ be the set of $n \times n$ real symmetric matrices. Let $G, H \in \mathcal{S}^n$ and define $f \colon \mathcal{S}^n \to \mathbb{R}$ by
\[
f(X) = \frac{1}{2}\| H \circ (X - G) \|_F^2,
\]
where $\circ$ is the Hadamard product and $\|\cdot\|_F$ is the Frobenius norm. We seek the minimizer of $f$ over the set $C$ of $n \times n$ correlation matrices, which is defined as the set of $n \times n$ symmetric positive semidefinite matrices with all ones on the diagonal; that is,
\[
C := \{ X \in \mathcal{S}^n \mid \diag(X) = e, X \succeq 0 \},
\]
where $e \in \mathbb{R}^n$ is the vector of all ones and $\diag \colon \mathcal{S}^n \to \mathbb{R}^n$ is the linear map that returns the vector along the diagonal of the input matrix. The adjoint linear map of $\diag$ is $\Diag \colon \mathbb{R}^n \to \mathcal{S}^n$ which maps a vector of length $n$ to the $n \times n$ diagonal matrix having that vector along its diagonal; indeed, it is easy to verify that $\langle v, \diag(M) \rangle = \langle \Diag(v), M \rangle$ for all $v \in \mathbb{R}^n$ and $M \in \mathcal{S}^n$, where the vector inner-product is $\langle x, y \rangle := x^T y$ for $x, y \in \mathbb{R}^n$ and the symmetric matrix inner-product is $\langle X, Y \rangle := \trace(XY)$ for $X, Y \in \mathcal{S}^n$.
Let $g \colon \mathcal{S}^n \to \mathbb{R} \cup \{+\infty\}$ be defined by
\[
g(X) = \delta_C(X) =
\begin{cases}
0, & X \in C, \\
+\infty, & X \not\in C.
\end{cases}
\]
The $H$-weighted nearest correlation matrix (H-NCM) problem is
\begin{equation}\label{HNCM}
\min_{X \in C} f(X) = \min_{X \in \mathcal{S}^n} f(X) + g(X).
\end{equation}
Note that the gradient of $f$ is given by $\nabla f(X) = H \circ H \circ (X - G)$ and has Lipschitz constant $L := \|H \circ H\|_F$. The KKT optimality conditions for \cref{HNCM} are given by
\begin{gather*}
\nabla f(X) - \Diag(y) - \Lambda = 0,
\\
\diag(X) = e, \quad X \succeq 0, \quad \Lambda \succeq 0, \quad \langle  \Lambda, X \rangle = 0.
\end{gather*}

\subsection{The subproblem}

The subproblem at $Y \in \mathcal{S}^n$ is given by
\[
\min_{X \in \mathcal{S}^n} f(Y) + \langle \nabla f(Y), X - Y \rangle + \frac{L}{2\tau} \| X - Y \|_F^2 + g(X).
\]
The KKT optimality conditions for the subproblem are given by
\begin{gather*}
\nabla f(Y) + \frac{L}{\tau}(X - Y) - \Diag(y) - \Lambda = 0, \\
\diag(X) = e, \quad X \succeq 0, \quad \Lambda \succeq 0, \quad \langle  \Lambda, X \rangle = 0.
\end{gather*}
The dual objective function of the subproblem is, up to an additive constant and a change in sign, given by
\[
\phi(y) :=
\frac{L}{2\tau} \left\| \left[ Y - \frac{\tau}{L} ( \nabla f(Y) - \Diag(y) ) \right]_+ \right\|_F^2  - \langle e, y \rangle.
\]
Note that $h$ is a differentiable convex function with gradient
\[
\nabla \phi(y) =
\diag \left(
\left[ Y - \frac{\tau}{L} ( \nabla f(Y) - \Diag(y) ) \right]_+
\right) - e.
\]
Suppose that $y$ solves
\[
\min_{y \in \mathbb{R}^n} \phi(y).
\]
Then $\nabla \phi(y) = 0$. We define $M$, $X$, and $\Lambda$ by
\[
M := Y - \frac{\tau}{L} ( \nabla f(Y) - \Diag(y) ),
\qquad
X := M_+,
\qquad
\Lambda := \frac{L}{\tau}(X - M) = -\frac{L}{\tau}M_-,
\]
where $M_+$ and $M_-$ are the projections of $M$ onto the set of positive semidefinite and negative semidefinite matrices, respectively. Note that $M = M_+ + M_-$ and $\langle M_+, M_- \rangle = 0$ by the Moreau decomposition theorem.
Thus we have $X \succeq 0$, $\diag(X) = e$, $\Lambda \succeq 0$, and $\langle \Lambda, X \rangle = 0$. Moreover,
\[
\Lambda
= \frac{L}{\tau}(X - M)
= \frac{L}{\tau}(X - Y) +  \nabla f(Y) - \Diag(y),
\]
which implies that
\[
\nabla f(Y) + \frac{L}{\tau}(X - Y) - \Diag(y) - \Lambda = 0.
\]
Thus, by minimizing the function $h$, we obtain the optimal solution of the subproblem.
Furthermore, letting
\[
\Gamma := -\Diag(y) - \Lambda,
\]
we have $\Gamma \in \partial g(X)$. Indeed, if $Z \in C$, then
\begin{align*}
g(X) + \langle \Gamma, Z - X \rangle
&= \langle -\Diag(y) - \Lambda, Z - X \rangle \\
&=
- \langle y, \diag(Z) \rangle
+ \langle y, \diag(X) \rangle
- \langle \Lambda, Z \rangle
+ \langle \Lambda, X \rangle \\
&=
- \langle \Lambda, Z \rangle \le 0 = g(Z),
\end{align*}
and if $Z \not\in C$, then $g(Z) = +\infty$, so $g(Z) \ge g(X) + \langle \Gamma, Z - X \rangle$ as well. Thus, we have shown that
\[
0 \in \nabla f(Y) + \frac{L}{\tau}(X - Y) + \partial g(X).
\]

\subsection{Approximately solving the subproblem}

In our implementation, we approximately minimize $\phi(y)$ using the quasi-Newton method L-BFGS-B \cite{Morales2011a,Zhu1997a}. Thus, we compute $y$ such that $\nabla \phi(y) \approx 0$, implying that $\diag(X) \approx e$. Thus, we expect that $X \not\in C$ and $g(X) = +\infty$. In order to satisfy the requirement that we have an $\varepsilon$-subgradient, it is necessary to have a point $\hat{X} \in C$. As is done in \cite{Borsdorf2010a,Defeng-S-2012}, we define $d := \diag(X)$ and $D := \Diag(d)^{-1/2}$; since $d \approx e$, we have that $D \succ 0$. We then let
\[
\hat{X} := DXD.
\]
Since $X \succeq 0$ and $D \succ 0$, we have that $\hat{X} \succeq 0$; moreover, $\diag(\hat{X}) = e$, as required.
Next we let
\[
\varepsilon := \langle \Lambda, \hat{X} \rangle
\quad \text{and} \quad
V := \nabla f(Y) + \frac{L}{\tau}(\hat{X} - Y) + \Gamma = \frac{L}{\tau}(\hat{X} - X).
\]
Note that $\varepsilon \ge 0$ since $\Lambda$ and $\hat{X}$ are both positive semidefinite. We claim that $\Gamma \in \partial_\varepsilon g(\hat{X})$. As before, if $Z \not\in C$, then $g(Z) = +\infty$, so $g(Z) \ge g(\hat{X}) + \langle \Gamma, Z - \hat{X} \rangle - \varepsilon$ holds. If $Z \in C$, then
\begin{align*}
g(\hat{X}) + \langle \Gamma, Z - \hat{X} \rangle - \varepsilon
&= \langle -\Diag(y) - \Lambda, Z - \hat{X} \rangle - \langle \Lambda, \hat{X} \rangle \\
&=
- \langle y, \diag(Z) \rangle
+ \langle y, \diag(\hat{X}) \rangle
- \langle \Lambda, Z \rangle
+ \langle \Lambda, \hat{X} \rangle
- \langle \Lambda, \hat{X} \rangle \\
&=
- \langle \Lambda, Z \rangle \le 0 = g(Z).
\end{align*}
Therefore, we have
\[
V \in \nabla f(Y) + \frac{L}{\tau}(\hat{X} - Y) + \partial_\varepsilon g(\hat{X}).
\]

\subsection{Computing projections}

Minimizing $\phi(y)$ using a quasi-Newton method like L-BFGS-B requires us to evaluate $\phi(y)$ and its gradient $\nabla \phi(y)$ for each new candidate minimizer $y$.
Each time we evaluate $\phi(y)$ and $\nabla \phi(y)$, we compute the projections $M_+$ and $M_-$ in order to compute $X$ and $\Lambda$. We do this by computing the full eigenvalue decomposition of $M$ and obtain $M_+$ (resp.~$M_-$) by setting the negative (resp.~positive) eigenvalues of $M$ to zero. The choice of eigensolver is important since around 90\% of the computation time is spent computing the eigenvalue decomposition of $M$. In our implementation of I-FISTA, IE-FISTA, and IA-FISTA, we compute $M_+$ and $M_-$ using the LAPACK \cite{Anderson1999a} \texttt{dsyevd} eigensolver to compute all the eigenvalues and eigenvectors of $M$; see Borsdorf and Higham \cite{Borsdorf2010a} for more on choice of eigensolver for computing $M_+$ in a preconditioned Newton algorithm for the nearest correlation matrix problem.

\subsection{Random instances}

For our numerical tests, we generate random $n \times n$ correlation matrices $U$ by sampling uniformly from the set of correlation matrices using the extended onion method \cite{Lewandowski2009a}. We then generate $n \times n$ symmetric matrices $G$ and $H$ using the following Julia code, based on the parameters $\gamma, p \in [0, 1]$, where $\gamma$ controls the amount of noise in $G$ and $p$ controls the sparsity of $H$.

\begin{lstlisting}
# Generate symmetric matrix E with ones on diagonal and off-diagonal entries
# sampled uniformly from the interval [-1, 1].
Etmp = 2 * rand(n, n) .- 1
E = Symmetric(triu(Etmp, 1) + I)

# Matrix G is the convex combination of the matrices U and E, with G = U when
# γ = 0 and G = E when γ = 1.
Gtmp = (1 - γ) .* U .+ γ .* E
G = Symmetric(triu(Gtmp, 1) + I)

# Generate symmetric matrix H with ones on diagonal and off-diagonal entries are
# uniformly sampled from the interval [0, 1] with probability p and are zero
# with probability 1 - p.
Htmp = [rand() < p ? rand() : 0.0 for i = 1:n, j = 1:n]
H = Symmetric(triu(Htmp, 1) + I)
\end{lstlisting}

For all our tests, we use $p = 0.5$ and we consider $n = 100, 200, \ldots, 800$ and $\gamma = 0.1, 0.2, \ldots, 1.0$, generating a random instance for each combination of $n$ and $\gamma$, giving us a total of eighty test instances.

\subsection{Numerical tests}

As was done in \cite{Defeng-S-2012}, we obtain a good initial point that is used by all three methods by solving the nearest correlation matrix problem
\[
\min_{X \in C} \frac{1}{2}\|X - G\|_F^2
\]
using the \textsc{Matlab} code \texttt{CorNewton3.m} \cite{Qi2009b} which is based on the quadratically convergent semismooth Newton method in \cite{Qi2006a}.

We also use a similar stopping criterion as the one used in \cite{Defeng-S-2012}. We let $r_p$ and $r_d$ be the norm of the primal and dual equality constraints for problem~\cref{HNCM}; that is,
\[
r_p := \|\diag(\hat{X}) - e\|_2,
\quad
r_d := \|\nabla f(\hat{X}) - \Diag(y) - \Lambda\|_F.
\]
Note that we are guaranteed to have $r_p$ be approximately machine epsilon based on how $\hat{X}$ is computed. We stop each method when
\[
\max\{r_p, r_d\} \le \mathtt{tol}.
\]
In our tests, we use $\mathtt{tol} = 10^{-1}$ since we found that using a smaller value of $\mathtt{tol}$ results in significantly more function/gradient evaluations and much longer running times for all three methods, but does not alter the main conclusions we draw from our numerical tests.

\begin{figure}
\includegraphics[width=\textwidth]{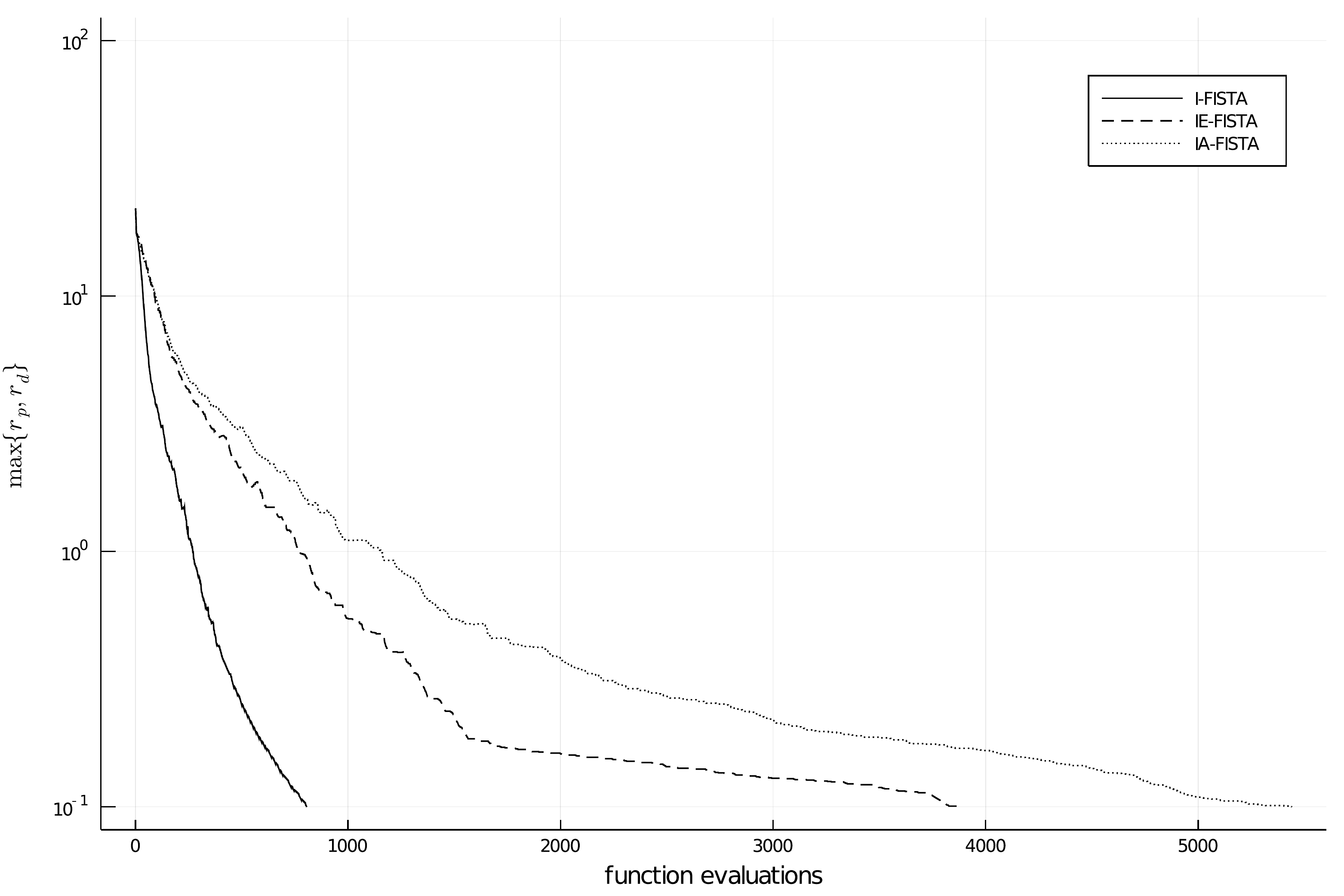}
\caption{Convergence plot for the \textup{I-FISTA}, \textup{IE-FISTA}, and \textup{IA-FISTA} methods on the $n = 400$ and $\gamma = 0.5$ test instance.}
\label{fig:convergence_plot}
\end{figure}

\begin{table}
  \centering
  \begin{tabular}{rr|rrr|rrr|rrr}
    \toprule
    $n$ & $\gamma$ & \multicolumn{3}{c|}{I-FISTA} & \multicolumn{3}{c|}{IE-FISTA} & \multicolumn{3}{c}{IA-FISTA} \\
    \cmidrule(lr){3-5} \cmidrule(lr){6-8} \cmidrule(lr){9-11}
    & & $k$ & fgs & time & $k$ & fgs & time & $k$ & fgs & time \\
    \midrule
100  &  0.10  &   27  &    45  &    0.1  &    35  &    155  &     0.3  &   26  &     52  &     0.1 \\
     &  0.20  &   56  &   102  &    0.2  &    70  &    229  &     0.4  &   53  &    160  &     0.4 \\
     &  0.30  &   88  &   169  &    0.4  &   111  &    360  &     0.6  &   85  &    330  &     0.7 \\
     &  0.40  &  131  &   261  &    0.4  &   164  &    600  &     1.1  &  127  &    624  &     1.3 \\
     &  0.50  &  130  &   269  &    0.4  &   162  &    617  &     1.1  &  125  &    676  &     1.3 \\
     &  0.60  &  141  &   296  &    0.5  &   176  &    709  &     1.2  &  137  &    784  &     1.4 \\
     &  0.70  &  143  &   300  &    0.5  &   178  &    827  &     1.4  &  139  &    817  &     1.4 \\
     &  0.80  &  144  &   304  &    1.1  &   175  &    860  &     2.1  &  140  &    857  &     2.7 \\
     &  0.90  &  147  &   310  &    0.6  &   179  &    871  &     1.9  &  142  &    887  &     1.8 \\
     &  1.00  &  148  &   320  &    1.0  &   179  &    729  &     2.2  &  143  &    922  &     2.6 \\
    \midrule
200  &  0.10  &   73  &   129  &    0.9  &    93  &    234  &     1.5  &   71  &    211  &     1.3 \\
     &  0.20  &  170  &   333  &    2.0  &   211  &    692  &     3.9  &  165  &    738  &     4.3 \\
     &  0.30  &  242  &   462  &    2.5  &   292  &   1099  &     5.9  &  235  &   1533  &     8.1 \\
     &  0.40  &  248  &   475  &    2.5  &   306  &   1201  &     6.3  &  239  &   1508  &     8.0 \\
     &  0.50  &  252  &   478  &    2.6  &   307  &   1165  &     6.1  &  243  &   1741  &     9.2 \\
     &  0.60  &  251  &   477  &    2.8  &   309  &   1207  &     6.5  &  244  &   1738  &     9.3 \\
     &  0.70  &  260  &   498  &    2.7  &   311  &   1122  &     6.0  &  250  &   1816  &     9.8 \\
     &  0.80  &  266  &   505  &    2.8  &   322  &   1422  &     7.5  &  257  &   1977  &    10.6 \\
     &  0.90  &  271  &   512  &    2.6  &   322  &   1711  &     8.9  &  260  &   1813  &     9.6 \\
     &  1.00  &  273  &   520  &    2.7  &   327  &   1638  &     8.5  &  263  &   2044  &    10.7 \\
    \midrule
300  &  0.10  &  124  &   241  &    2.9  &   147  &    403  &     5.1  &  121  &    414  &     5.4 \\
     &  0.20  &  330  &   620  &    7.6  &   369  &   1421  &    17.5  &  312  &   2104  &    25.4 \\
     &  0.30  &  332  &   621  &    7.7  &   383  &   1573  &    19.6  &  322  &   2699  &    34.4 \\
     &  0.40  &  347  &   662  &    8.1  &   395  &   1447  &    17.8  &  335  &   3127  &    38.3 \\
     &  0.50  &  355  &   640  &    8.1  &   405  &   1825  &    23.3  &  343  &   3136  &    39.3 \\
     &  0.60  &  366  &   644  &    8.1  &   419  &   2858  &    36.4  &  355  &   3361  &    41.5 \\
     &  0.70  &  371  &   648  &    8.0  &   423  &   3674  &    45.5  &  359  &   4013  &    48.8 \\
     &  0.80  &  376  &   685  &    8.7  &   427  &   3672  &    46.7  &  364  &   4156  &    52.2 \\
     &  0.90  &  380  &   703  &    8.8  &   430  &   4022  &    51.6  &  366  &   4855  &    59.7 \\
     &  1.00  &  388  &   734  &    9.3  &   432  &   4223  &    53.7  &  373  &   4742  &    59.3 \\
    \midrule
400  &  0.10  &  182  &   333  &    7.6  &   233  &    450  &    10.5  &  177  &    737  &    17.1 \\
     &  0.20  &  413  &   783  &   17.4  &   511  &   2050  &    45.9  &  401  &   3829  &    82.2 \\
     &  0.30  &  431  &   778  &   17.4  &   531  &   1790  &    40.1  &  418  &   5430  &   118.4 \\
     &  0.40  &  450  &   803  &   18.4  &   549  &   2503  &    56.1  &  436  &   5147  &   111.8 \\
     &  0.50  &  467  &   806  &   18.0  &   567  &   3715  &    83.9  &  453  &   5440  &   118.1 \\
     &  0.60  &  479  &   881  &   19.7  &   578  &   4632  &   103.9  &  462  &   5828  &   126.8 \\
     &  0.70  &  489  &   919  &   20.6  &   585  &   4075  &    91.3  &  472  &   6439  &   140.8 \\
     &  0.80  &  499  &   945  &   21.2  &   596  &   3701  &    83.3  &  480  &   6974  &   153.2 \\
     &  0.90  &  507  &   947  &   21.3  &   602  &   3944  &    89.1  &  488  &   7543  &   165.1 \\
     &  1.00  &  509  &   971  &   22.2  &   611  &   3459  &    78.8  &  491  &   7490  &   166.2 \\
    \bottomrule
  \end{tabular}
  \caption{The number of iterations ($k$), function/gradient evaluations (fgs), and time in seconds for the \textup{I-FISTA}, \textup{IE-FISTA}, and \textup{IA-FISTA} methods for $n = 100, 200, 300, 400$.}
  \label{table1}
\end{table}

\begin{table}
  \centering
  \begin{tabular}{rr|rrr|rrr|rrr}
    \toprule
    $n$ & $\gamma$ & \multicolumn{3}{c|}{I-FISTA} & \multicolumn{3}{c|}{IE-FISTA} & \multicolumn{3}{c}{IA-FISTA} \\
    \cmidrule(lr){3-5} \cmidrule(lr){6-8} \cmidrule(lr){9-11}
    & & $k$ & fgs & time & $k$ & fgs & time & $k$ & fgs & time \\
    \midrule
500  &  0.10  &  265  &   513  &   20.2  &   320  &    667  &    26.3  &  257  &   1203  &    45.8 \\
     &  0.20  &  499  &   934  &   35.0  &   595  &   2429  &    94.6  &  484  &   5940  &   216.7 \\
     &  0.30  &  529  &   924  &   35.3  &   626  &   2716  &   102.8  &  513  &   7012  &   261.6 \\
     &  0.40  &  559  &   994  &   37.4  &   654  &   5697  &   214.6  &  539  &   9331  &   340.8 \\
     &  0.50  &  573  &  1072  &   40.5  &   669  &   4672  &   175.9  &  553  &   7133  &   260.5 \\
     &  0.60  &  591  &  1117  &   42.4  &   679  &   4781  &   182.3  &  569  &   7549  &   279.0 \\
     &  0.70  &  598  &  1093  &   41.5  &   700  &   6730  &   256.2  &  578  &   7455  &   275.4 \\
     &  0.80  &  607  &  1173  &   44.5  &   705  &   5664  &   215.5  &  585  &   8618  &   318.7 \\
     &  0.90  &  617  &  1146  &   43.6  &   711  &   4724  &   180.1  &  594  &   8562  &   318.1 \\
     &  1.00  &  626  &  1227  &   46.7  &   721  &   3796  &   144.9  &  604  &   9962  &   369.4 \\
    \midrule
600  &  0.10  &  377  &   709  &   46.6  &   434  &   1102  &    72.0  &  366  &   2919  &   185.3 \\
     &  0.20  &  586  &  1085  &   68.4  &   678  &   3109  &   194.5  &  567  &   9471  &   576.4 \\
     &  0.30  &  629  &  1089  &   67.2  &   723  &   4896  &   303.7  &  610  &   8744  &   525.9 \\
     &  0.40  &  662  &  1132  &   66.8  &   750  &   6461  &   383.7  &  636  &   9164  &   526.4 \\
     &  0.50  &  683  &  1263  &   76.1  &   777  &   5335  &   318.3  &  660  &  10852  &   629.3 \\
     &  0.60  &  696  &  1282  &   78.5  &   792  &   5446  &   332.7  &  673  &  12647  &   751.9 \\
     &  0.70  &  710  &  1333  &   82.5  &   801  &   7079  &   439.5  &  685  &  12607  &   758.0 \\
     &  0.80  &  729  &  1368  &   82.9  &   811  &   5087  &   307.4  &  703  &   9818  &   576.0 \\
     &  0.90  &  733  &  1388  &   84.3  &   832  &   4203  &   257.1  &  708  &  10843  &   640.3 \\
     &  1.00  &  739  &  1439  &   89.0  &   840  &   5726  &   354.7  &  715  &  10698  &   645.6 \\
    \midrule
700  &  0.10  &  521  &   979  &   89.4  &   586  &   1859  &   169.5  &  498  &   6468  &   571.7 \\
     &  0.20  &  673  &  1251  &  109.0  &   762  &   3394  &   298.6  &  651  &  10581  &   896.6 \\
     &  0.30  &  725  &  1217  &  105.7  &   819  &   6842  &   596.5  &  703  &  12948  &  1094.2 \\
     &  0.40  &  762  &  1424  &  124.1  &   847  &   5549  &   485.3  &  733  &  11215  &   951.6 \\
     &  0.50  &  786  &  1380  &  121.6  &   871  &   6919  &   606.9  &  756  &  11574  &   987.7 \\
     &  0.60  &  803  &  1524  &  135.3  &   888  &   7032  &   626.5  &  773  &  13455  &  1160.8 \\
     &  0.70  &  819  &  1511  &  133.9  &   911  &   8809  &   783.1  &  791  &  15117  &  1301.1 \\
     &  0.80  &  828  &  1540  &  137.7  &   923  &   8837  &   792.4  &  800  &  15366  &  1335.4 \\
     &  0.90  &  842  &  1624  &  142.8  &   932  &   9432  &   829.4  &  813  &  15936  &  1369.9 \\
     &  1.00  &  855  &  1661  &  148.7  &   941  &   9716  &   873.2  &  824  &  17285  &  1509.8 \\
    \midrule
800  &  0.10  &  692  &  1286  &  165.3  &   756  &   3108  &   400.4  &  670  &  11408  &  1420.6 \\
     &  0.20  &  762  &  1421  &  177.2  &   848  &   3697  &   454.4  &  738  &  14087  &  1682.3 \\
     &  0.30  &  822  &  1347  &  173.1  &   907  &   6835  &   878.1  &  794  &  12437  &  1542.9 \\
     &  0.40  &  862  &  1573  &  199.0  &   947  &   7783  &   995.7  &  832  &  14689  &  2041.1 \\
     &  0.50  &  887  &  1639  &  208.1  &   970  &   7028  &   893.4  &  856  &  16092  &  1985.1 \\
     &  0.60  &  910  &  1700  &  213.7  &   995  &   9012  &  1131.0  &  879  &  18385  &  2242.0 \\
     &  0.70  &  923  &  1747  &  225.2  &  1013  &   9210  &  1188.9  &  892  &  18666  &  2296.6 \\
     &  0.80  &  943  &  1805  &  231.3  &  1023  &   8011  &  1032.9  &  905  &  13242  &  1651.0 \\
     &  0.90  &  960  &  1840  &  238.7  &  1036  &  10180  &  1324.2  &  927  &  14950  &  1892.3 \\
     &  1.00  &  971  &  1884  &  243.9  &  1056  &  11607  &  1502.1  &  937  &  16054  &  2026.2 \\
     \bottomrule
  \end{tabular}
  \caption{The number of iterations ($k$), function/gradient evaluations (fgs), and time in seconds for the \textup{I-FISTA}, \textup{IE-FISTA}, and \textup{IA-FISTA} methods for $n = 500, 600, 700, 800$.}
  \label{table2}
\end{table}

\begin{figure}
\includegraphics[width=\textwidth]{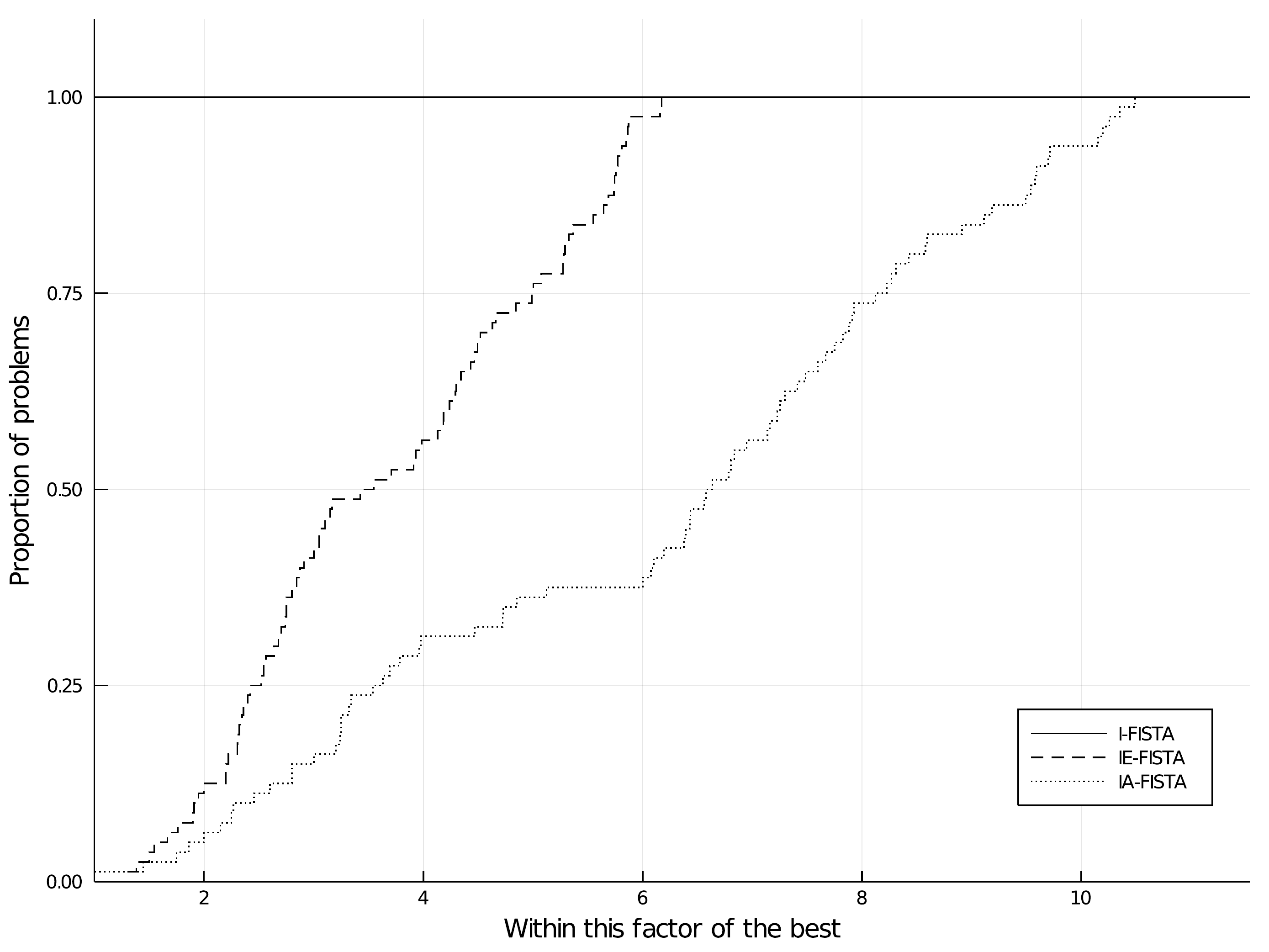}
\caption{Performance profile of total running time for the \textup{I-FISTA}, \textup{IE-FISTA}, and \textup{IA-FISTA} methods on all test instances.}
\label{fig:performance_profile}
\end{figure}

An example of the typical convergence behavior of the three methods is shown in \cref{fig:convergence_plot} where the value of $\max\{r_p,r_d\}$ is plotted each time $\phi(y)$ and $\nabla \phi(y)$ are evaluated. Note, however, that during the linesearch procedure of L-BFGS-B, the value of $\max\{r_p,r_d\}$ may vary drastically, so, to obtain a plot without such  noise, we replace those intermediate linesearch values with the value obtained at the termination of the linesearch or when the stopping condition for the subproblem is satisfied.

In \cref{table1,table2} we record the number of outer iterations ($k$), the number of function/gradient evaluations (fgs), and the total running time in seconds, but not including the time to compute the initial point. From these results, it is clear that $k$, fgs, and time increase for all three methods as $n$ increases and as $\gamma$ increases. However, we also see that although I-FISTA and IE-FISTA require more outer iterations than IA-FISTA, each require fewer total inner iterations (i.e., fgs), and hence less time, than IA-FISTA to solve each instance to the desired tolerance.

Here we include an interesting point. In our numerical tests we observed that L-BFGS-B was always able to satisfy the IR Rule, typically in a small number of iterations. However, we were curious to see that sometimes L-BFGS-B failed to satisfy the IER Rule and only stopped due to a failure of the linesearch or due to having identical function values on two consecutive function evaluations. We would like to investigate this behavior in greater detail in future research.

To see the forest for the trees, in \cref{fig:performance_profile} we plot the performance profile \cite{Dolan2002a,Gould2016a,More2009a} of the numerical results from \cref{table1,table2} using the total running time of each solver on each instance. From this plot we clearly see that I-FISTA is the fastest on all test instances and that IE-FISTA also outperforms IA-FISTA on the test instances. Thus, although all three algorithms have the same theoretical rate of convergence, we have demonstrated that the relative error rules and corresponding algorithms proposed in this paper, I-FISTA and, to a lesser extent, IE-FISTA, are potentially valuable to use in situations where IA-FISTA has proved successful in practice.

\section{Final Remarks}\label{Finalremarks}

This paper proposed and analyzed two inexact versions of FISTA for minimizing the sum of two convex functions.
Both schemes allow their subproblems to be solved inexactly subject to satisfying certain relative error rules.
Numerical experiments were carried out in order to illustrate the numerical behavior of the methods.
They indicate that the proposed methods based on  inexact relative error rules are more efficient than those based on the inexact absolute error rule on a set of instances of the $H$-weighted nearest correlation matrix problem.

\bibliographystyle{siamplain}
\bibliography{refs}
\end{document}